 \newtheorem{theor}{\indent\sc Theorem}[section]
 \newtheorem{corol}[theor]{\indent\sc Corollary}
 \newtheorem{lemma}[theor]{\indent\sc Lemma}   
 \newtheorem{prop}[theor]{\indent\sc Proposition}
 \newtheorem{example}[theor]{\indent\sc Example}
 \newtheorem{dfn}[theor]{\indent\sc Definition}
\newcommand{\real }{{\mathbb R}}
\newcommand{\complex }{{\mathbb C}}
\newcommand{\sitter}{{\mathbb S}_{1}^{3}(1)}
\newcommand{\mink }{{{\real}_{1}^{4}}}
\newcommand{\nlight}[1]{(1+{#1}{\overline{{#1}}},{#1}+{\overline{{#1}}},
-i({#1}-{\overline{{#1}}}),-1+{#1}{\overline{{#1}}})}
\newcommand{\weis}[2]{(1 + {#1}\overline{{#2}},{#1} + \overline{{#2}},-i ({#1} - \overline{{#2}}),-1 + {#1}\overline{{#2}})}
\newcommand {\lpr}[2]{{\langle}{#1},{#2}{\rangle}} 
\newcommand {\clpr}[2]{{\langle}{#1},{#2}{\rangle}^{\complex}}
\newcommand {\B}{{\mathcal B}}
\newcommand {\F}{{\mathcal F}}
\def\Re{{\frak R\frak e}}
\def\Im{{\frak I\frak m}}
\def\l{{\langle}}
\def\r{{\rangle}}
\newcommand{\grass}{\complex P^{3}}
\begin{document}

\title{Timelike surfaces in the de Sitter space $\sitter \subset \mink$}

\author {M. P. Dussan,\,A. P. Franco Filho} 
 \address{ Universidade de S\~ao Paulo, Departamento de
Matem\'atica - IME \\CEP: 05508-090. S\~ao Paulo. Brazil}
\email{dussan@ime.usp.br (M.P. Dussan),apadua@ime.usp.br (A.P. Franco Filho)}
\author{M. Magid}
\address{Wellesley College, Wellesley MA, 02181}
\email{Corresponding authour:  mmagid@wellesley.edu}

\maketitle

%\footnote{Corresponding author: mmagid@wellesley.edu (M. Magid)}

%\centerline{\small{ Wellesley College - USA}}

\begin{abstract} This paper studies timelike minimal  surfaces in the De Sitter space $\mathbb S^3_1(1) \subset \mathbb R^4_1$ via a complex variable. Using complex analysis
and stereographic projection of  lightlike vectors we obtain a representation formula. Real and complex special quadrics in $\mathbb CP^3$ are identified with the grassmannians of spacelike and timelike oriented 2-planes of $\mathbb R^4_1$, and the normal frame is written in terms of certain complex valued functions $x$ and $y$, which may be considered holomorphic functions as a special case. Then several results describing the analytic restrictions via solutions of certain PDE in complex variable, are shown. Finding solutions allows us to identify explicitly the representation of the associated surfaces.  Moreover, using our technique we find a new kind of complex
function which we call quasi-holomorphic  and which satisfy
a generalized version of  the Cauchy-Riemann equations. Our technique allows the explicit construction of many families of minimal timelike surfaces in $\mathbb S^3_1(1)$ whose intrinsic Gauss map will also belong to the same class  of surfaces. 
 \end{abstract}

\vspace{0.2cm}
Keywords: Minimal surfaces, timelike surfaces, isotropic coordinates, De Sitter Space, holomorphic functions.  

\vspace{0.1cm}
MSC: 53C42; 53B30; 30D60; 34A26

\section{Introduction}
There have been many papers on timelike minimal surfaces in different ambient spaces.  One of the first is Louise McNertney's thesis (\cite{Be}) in 1980, followed, in 1990 by the work of Van de Woestyne (\cite{VdW}).  These papers work with either isotropic (null)  coordinates or isothermal coordinates and examine various differential equations to analyze timelike minimal surfaces.  Other techniques appear later. Beginning with the work of Konderak (\cite{K}) in 2005 we find the split-complex (para-complex) numbers used in place of complex numbers to extend some results from positive definite surfaces to timelike minimal surfaces.  This led, for example, to looking at the Bj\"orling problem for timelike surfaces in various ambient spaces; see for instance \cite{CDM}, \cite{DM}, \cite{DPM}, \cite{MO}.  While using the split-complex numbers allows many arguments to carry over to the timelike case, there are some difficulties - namely that all split-meromorphic functions have singularities that consist of curves, not points.  

Our main goal in this paper it is to re-introduce complex analysis into the study of timelike minimal surfaces using  parameterizations of the null cone and spacelike planes. In particular, our focus is  timelike minimal surfaces in the De Sitter space $S^3_1(1) \subset \mathbb R^4_1$ using a complex variable. In order to do this,  we associate, to two lightlike tangent vectors, an ordered pair $(x,y)$ obtained through  stereographic projection from the north pole, where $x$ and $y$ are functions defined on  open set of the surface and take complex values. Those functions may be assumed to be holomorphic functions when we restrict the conditions to obtain minimal surfaces. We also identify real and complex quadrics in $\mathbb C P^3$, respectively, with the set of  timelike or spacelike oriented planes of $\mathbb R^4_1$. This allows us to obtain a complex representation formula for the surface  involving the functions $x$ and $y$.  After that we establish our technique of constructing the minimal surface $S^3_1(1)$
by identifying the complex PDE  which appears when imposing the conditions  of flat normal bundle in $\mathbb R^4_1$ and the existence of isotropic coordinates (or lightlike coordinates) on the  surface. We call these, the spherical and isotropic conditions.  

Using the complex variable to study the timelike surfaces in $S^3_1(1)$, we also prove that if the surface $(M,f)$ is an isotropic surface in $S^3_1(1)$ where $f$ is represented in terms of $x$ and $y$, with intrinsic Gauss map  $\nu$, then the functions $x$ and $y$ satisfy a new type of  partial differential equation, which generalize the
Cauchy-Riemann equations. We call the solutions of that PDE, {\it quasi-holomorphic functions}. In particular, that set of solutions contains  the holomorphic functions. From a geometric point of view, we also show that  the pairs $(M, f)$ and $(M, \nu)$ are strongly related. More specifically,  if $(M,f )$ is assumed, for instance, minimal non-totally geodesic isotropic surface in $\mathbb S^3_1(1)$ with Gauss map $\nu (w)$, then $(M, \nu)$ will also represent an isotropic minimal surface in  $\mathbb S^3_1(1)$ non-totally geodesic with Gauss map $f(w)$, and conversely. Some explicit examples are given.

\vspace{0.1cm}
In particular, in order to  focus on minimal timelike surfaces in $\mathbb S^3_1(1)$ we assume that $x$ and $y$ are are holomorphic functions.  Then we prove that $x$ and $y$ are related through a Mobius map and that the argument $\theta$ of the integration factor of complex derivate $f_w$ has to be a harmonic function in $M$. Moreover we obtain the explicit expression of the $x$ and $y$ functions in terms of the argument $\theta$.  Finally we
use our technique to construct explicit families of minimal timelike surfaces in $\mathbb S^3_1(1)$ with the associated families of $(M,\nu)$.
\section{Preliminaries}

The Minkowski vector space $ \mink$ is the real vector space $\mathbb R^{4}$ endowed with the usual Euclidean topology and 
with the semi-Riemannian metric 
$$\langle \;, \; \rangle = -(d x^{1})^{2} + (d x^{2})^{2} + (d x^{3})^{2} + (d x^{4})^{2}.$$ 
It is oriented vectorially by $\partial_{1} \wedge \partial_{2} \wedge\partial_{3} \wedge\partial_{4}$ and temporally by $\partial_{1}$, where  $\{\partial_{1}, \partial_{2}, \partial_{3}, \partial_{4}\}$ is 
the canonical basis of $\mink$.

\vspace{0.1cm}
Throughout this paper, $M$ will be an open connected and simply connected subset of the set of the 
complex numbers $\complex$. We will denote by $\mathcal{H}(M)$ the set of holomorphic maps from $M \subset \complex$ 
into $\complex$. A map $f = P + i Q$ from $M$ into $\complex$ is an anti-holomorphic map if, and only if, its conjugate map
$\overline{f} = P - iQ$ is a holomorphic map. The set of all anti-holomorphic maps will be denoted by 
$\overline{\mathcal{H}}(M)$. The set of all continuously differentiable maps from $M$ into $\complex$ we will be denoted by 
${C}^{\infty}(M,\complex)$, and we say that these maps are smooth maps from $M$ into $\complex$.

Let
$$\frac{\partial}{\partial w} = \frac{1}{2} \left( \frac{\partial}{\partial u} - i \frac{\partial}{\partial v} \right) 
\; \; \ \ \ {\rm and } \; \; \ \ 
\frac{\partial}{\partial \overline{w}} = \frac{1}{2} \left( \frac{\partial}{\partial u} + i \frac{\partial}{\partial v} 
\right)$$
be the differential operators defined over the set of all smooth maps from $M$ into $\complex$, where $w = u + i v \in M$. 

It follows that 
a smooth map $f$ from $M$ into $\complex$ 
is a holomorphic map if and only if $\frac{\partial}{\partial \overline{w}} f(w) = 0$ for all $ w \in M$. 

Here we  will also use often  
the notation $\frac{\partial f}{\partial w} = f_{w}$ \ and \ $\frac{\partial f}{\partial \overline{w}} = f_{\overline{w}}$. 

\section{Surfaces in $\sitter$}

A parametric surface of $\mink$ is a two parameter function $f : M \longrightarrow \mink$ where 
$M$ is  a connected open subset of $\real^{2}$, satisfying the following conditions: \\
(1) The function $f$ is a homeomorphism from $M$ onto $S = f(M)$ endowed   subspace topology of $\mink$.\\
(2) The function $f$ is  $C^{\infty}(M,\mink)$. \\
(3) For each $w = (u,v) \in M$ the set $\{f_{u}(w),f_{v}(w)\}$ is a linearly independent set, and the induced metric is given by 
$$ds^{2}(f) = E du^2 + 2F du dv + G dv^2$$ 
where the functions $E(w)$, $F(w)$ and $G(w)$ are given by 
$$E(w) = \lpr{\frac{\partial f(w)}{\partial u}}{\frac{\partial f(w)}{\partial u}} \; \  \ {\rm and } \; \ \ \ 
F(w) = \lpr{\frac{\partial f(w)}{\partial u}}{\frac{\partial f(w)}{\partial v}} \; \ \ \ {\rm and } \;  \ \ \ 
G(w) = \lpr{\frac{\partial f(w)}{\partial v}}{\frac{\partial f(w)}{\partial v}}.$$
 
\begin{dfn}\label{1}
A timelike surface in the sphere $\sitter$ is the pair $(M,f)$, where the function $f:M \to \mathbb R^4_1$ satisfies the conditions 
(1),(2) and (3) above, and for each $w \in M$ we have $\lpr{f(w)}{f(w)} = 1$, with the  metric tensor satisfying  $EG-F^2<0$, i.e., it is a non-degenerate Lorentz metric.  We call the local coordinates {\rm null} or  {\rm isotropic} if the metric has the form:
$ds^{2}(f) = 2F du dv$. This is always possible locally to find null coordinates. 

{\rm In this paper  we call a surface isotropic when we are using these local null coordinates.}

\vspace{0.2cm}
We assume that the lightlike vectors fields $f_{u}$ and $f_{v}$ are future directed, hence, $F(w) < 0$ for each $w \in M$. Moreover, we assume the surface equipped with the  Gauss map $\nu : M \longrightarrow \sitter$ which  is defined by the 
following conditions:   for each $w \in M$,\\
(1) $\lpr{\nu(w)}{\nu(w)} = 1$ \ and \ $\lpr{\nu(w)}{f(w)} = 0$. \\ 
(2) $\lpr{\nu(w)}{f_{u}(w)} = 0 = \lpr{\nu(w)}{f_{v}(w)}$. \\ 
(3) The ordered set \ $\{f(w), f_{u}(w), f_{v}(w), \nu(w)\}$ is an oriented positive basis of \ $\mink$.  
\end{dfn}

We observe that, if we assume $(M,f)$ with $f:M \to \mathbb R^4_1$ and the Gauss map $\nu(w)$ as previously defined, it follows from
conditions (1) and (2) of Definition  \ref{1}, that $\nu_{u}(w)$ and  $\nu_{v}(w) \in T_{f(w)}S$. So, we call the condition
$$(\forall w \in M) \; \; \{\nu_{u}(w), \nu_{v}(w)\} \subset T_{f(w)}S $$
the {\it Spherical condition}. This means that the normal connection of this class of surface is flat. 

\subsection{Gauss and Weingarten equations} 

From now on we will assume that the Gauss map $\nu(w)$ is not constant. Next we will establish the Gauss and Weingarten equations 
for an isotropic surface $(M,f)$ of $\sitter$ with Gauss map $\nu(w)$. Let 
$$\B(w) = \{f(w), f_{u}(w), f_{v}(w), \nu(w)\}_{w \in M}$$ 
be the family of pointwise bases for $\mink$ given by (3) of Definition \ref{1}.

\begin{lemma}
Let $(M,f)$ be an isotropic surface of $\sitter$ equipped with the Gauss map $\nu(w)$. 
Since  $\nu_{u}(w), \nu_{v}(w) \in Span\{f_{u}(w),f_{v}(w)\}$, the structural equations for the surface are:
\begin{equation}
\left\{\begin{matrix}
f_{uu} = \frac{F_{u}}{F} f_{u} + a \nu \\
f_{uv} = -F f + b \nu \\
f_{vv} = \frac{F_{v}}{F} f_{v} + c \nu 
\end{matrix}\right. \; \; \ {\rm (Gauss), } \; \; \ \ \ \ 
\left\{\begin{matrix}
\nu_{u} =- \frac{b}{F} f_{u} - \frac{a}{F} f_{v} \\
\nu_{v} = -\frac{c}{F} f_{u} - \frac{b}{F} f_{v}
\end{matrix} \right. \; \; \ {\rm (Weingarten). }  
\end{equation}
Moreover, the surface $(M,f)$ is minimal if and only if $f_{uv}(w) = -F(w) f(w)$ and this means that $b(w) = 0$ for each $w \in M$.  
\end{lemma}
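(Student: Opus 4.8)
The plan is to exploit that $\B(w)=\{f,f_u,f_v,\nu\}$ is a pointwise basis of $\mink$, so that each second derivative is a unique linear combination of these four vectors whose coefficients are recovered by pairing with the basis through its (easily inverted) Gram matrix. First I would record that Gram matrix using the isotropic and Gauss-map hypotheses: from $E=\langle f_u,f_u\rangle=0$, $G=\langle f_v,f_v\rangle=0$, $F=\langle f_u,f_v\rangle\neq 0$, $\langle f,f\rangle=1$, $\langle\nu,\nu\rangle=1$, and the orthogonality relations $\langle\nu,f\rangle=\langle\nu,f_u\rangle=\langle\nu,f_v\rangle=\langle f,f_u\rangle=\langle f,f_v\rangle=0$ (the last two from differentiating $\langle f,f\rangle=1$), the matrix is block diagonal with sole off-diagonal entry $\langle f_u,f_v\rangle=F$. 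Consequently, for any vector $X$ its $f$- and $\nu$-coefficients are $\langle X,f\rangle$ and $\langle X,\nu\rangle$, while its $f_u$- and $f_v$-coefficients are $\langle X,f_v\rangle/F$ and $\langle X,f_u\rangle/F$ respectively (note the cross structure forced by the null metric).

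Next I would produce the scalar identities needed to evaluate these pairings, all by differentiating the metric relations. Differentiating $E=0$, $G=0$, and $F$ with respect to $u$ and $v$ gives $\langle f_{uu},f_u\rangle=0$, $\langle f_{uv},f_u\rangle=0$, $\langle f_{vv},f_v\rangle=0$, $\langle f_{uv},f_v\rangle=0$, together with $\langle f_{uu},f_v\rangle=F_u$ and $\langle f_{vv},f_u\rangle=F_v$; differentiating $\langle f,f_u\rangle=0$ and $\langle f,f_v\rangle=0$ yields $\langle f_{uu},f\rangle=0=\langle f_{vv},f\rangle$ and $\langle f_{uv},f\rangle=-F$. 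Feeding these into the coefficient formulas of the previous paragraph, and naming $a=\langle f_{uu},\nu\rangle$, $b=\langle f_{uv},\nu\rangle$, $c=\langle f_{vv},\nu\rangle$ for the remaining $\nu$-components, immediately produces the three Gauss equations.

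For the Weingarten equations I would first invoke the Spherical condition — equivalently, differentiate $\langle\nu,\nu\rangle=1$ to get $\langle\nu_u,\nu\rangle=\langle\nu_v,\nu\rangle=0$ and $\langle\nu,f\rangle=0$ to get $\langle\nu_u,f\rangle=\langle\nu_v,f\rangle=0$ — so that $\nu_u,\nu_v$ carry no $f$- or $\nu$-component and lie in $\mathrm{Span}\{f_u,f_v\}$. Their tangential coefficients then follow from differentiating $\langle\nu,f_u\rangle=0$ and $\langle\nu,f_v\rangle=0$ with respect to $u$ and $v$: this gives $\langle\nu_u,f_u\rangle=-a$, $\langle\nu_u,f_v\rangle=-b$, $\langle\nu_v,f_u\rangle=-b$, $\langle\nu_v,f_v\rangle=-c$, and dividing by $F$ as dictated by the Gram inverse yields the stated Weingarten equations, with the same $a,b,c$ — a consistency that is exactly the symmetry of the shape operator in these coordinates.

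Finally, for the minimality claim I would compute the mean curvature of $(M,f)$ viewed as a surface in $\sitter$, whose unit normal in $\sitter$ is $\nu$ (the vector $f$ being the normal to the ambient sphere). Tracing the $\nu$-valued second fundamental form, with entries $a,b,c$, against the inverse of the metric $\begin{pmatrix} E & F \\ F & G \end{pmatrix}=\begin{pmatrix} 0 & F \\ F & 0 \end{pmatrix}$ gives $H=\tfrac12\,g^{ij}\mathrm{II}_{ij}=b/F$. Since $F\neq 0$, the surface is minimal precisely when $b=0$, which by the middle Gauss equation is equivalent to $f_{uv}=-Ff$. The one point requiring care — and which I would treat as the crux — is this last identification: one must separate the component of the second fundamental form along $f$ (forced by the constraint $\langle f,f\rangle=1$ and belonging to the ambient embedding, not to the geometry inside $\sitter$) from the genuinely extrinsic $\nu$-component, and then recognize that it is only the off-diagonal coefficient $b$ that survives the null-coordinate trace.
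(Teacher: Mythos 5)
Your proposal is correct and follows essentially the same route as the paper's proof: you define $a=\lpr{f_{uu}}{\nu}$, $b=\lpr{f_{uv}}{\nu}$, $c=\lpr{f_{vv}}{\nu}$, recover all remaining coefficients by differentiating the metric relations and inverting the (block-diagonal, null-cross) Gram matrix of $\{f,f_u,f_v,\nu\}$, and identify minimality with the vanishing of the trace $2b/F$ of the shape operator. The only difference is one of completeness — the paper verifies a single representative coefficient and leaves the rest to the reader, whereas you carry out all the pairings explicitly, including the trace computation in null coordinates.
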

\begin{proof}We define $a=\l f_{uu}, \nu \r$, $b=\l f_{uv}, \nu \r$ and $c=\l f_{vv}, \nu \r$.  Once that is done it is easy to verify the Gauss and Weingarten equations.  For instance, since $\l f_u, f \r=0$ we have 
$$\l f_{uv},f \r + \l f_u, f_v \r=0,$$ thereby obtaining the coefficient of $f$ in the decomposition of $f_{uv}$.  Finally note that minimality means the trace of the shape operator is zero, or $b=0.$
\end{proof}

Note that when the Gauss map $\nu(w) \in \mink$ is a constant vector, the surface $f(M)$ is totally geodesic surface, 
hence it is a minimal surface of $\sitter$. 
The timelike hyperplane $[\nu]^{\perp}$ contain $S = f(M)$. The Gauss curvature of $S$ is $K(f)(w) = 1$ for all $w \in M$. 

\begin{corol}
Let $(M,f)$ be an isotropic surface of $\sitter$ equipped with the non-constant Gauss map $\nu(w)$. Then the fundamental  equations are
given by 
$$K(f) = \frac{-1}{F}\left(\frac{F_{u}}{F}\right)_{v} = 1 - \frac{ac - b^{2}}{F^{2}} 
\; \; \; \; {\rm  (Gauss)}$$ 
$$\frac{\partial b}{\partial u} - \frac{\partial a}{\partial v} = b \frac{F_{u}}{F} 
\; \; \ \  {\rm and } \; \; \ \ 
\frac{\partial b}{\partial v} - \frac{\partial c}{\partial u} = b \frac{F_{v}}{F} \; \; \; \; \ {\rm (Codazzi)}.$$ 

Moreover if $(M,f)$ is minimal then $a(u,v) = a(u)$ and $c(u,v) = c(v)$, that means $a$ and $c$ are functions which depend only of $u$ and $v$, respectively.
\end{corol}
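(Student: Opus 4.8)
The plan is to obtain all three fundamental equations as the integrability (compatibility) conditions of the moving frame $\B(w)$ supplied by the preceding Lemma, namely from the two identities $f_{uuv}=f_{uvu}$ and $f_{vvu}=f_{uvv}$. The idea is to differentiate the Gauss relations once more, use the Gauss and Weingarten equations to rewrite every resulting third derivative purely as a linear combination of the basis $\{f,f_u,f_v,\nu\}$, and then, since this is a basis of $\mink$ at each point, to equate coefficients.

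First I would differentiate $f_{uu}=\frac{F_u}{F}f_u+a\nu$ with respect to $v$ and $f_{uv}=-Ff+b\nu$ with respect to $u$, substituting $\nu_u,\nu_v$ from Weingarten and the remaining Gauss relations wherever an $f_{uv}$, $\nu_u$ or $\nu_v$ appears. Comparing the coefficients of $f_u$ on the two sides yields $\left(\frac{F_u}{F}\right)_v-\frac{ac}{F}=-F-\frac{b^2}{F}$, and dividing by $-F$ gives $-\frac{1}{F}\left(\frac{F_u}{F}\right)_v=1-\frac{ac-b^2}{F^2}$. Comparing instead the coefficients of $\nu$ yields $b_u-a_v=b\frac{F_u}{F}$, the first Codazzi equation. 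I would then repeat the computation with the pair $f_{vvu}=f_{uvv}$, differentiating $f_{vv}=\frac{F_v}{F}f_v+c\nu$ in $u$ and $f_{uv}$ in $v$; the $\nu$-coefficient comparison now produces the second Codazzi equation $b_v-c_u=b\frac{F_v}{F}$, while the $f_v$-coefficient comparison reproduces the Gauss relation in symmetric form, which is consistent since $\left(\frac{F_u}{F}\right)_v=\left(\frac{F_v}{F}\right)_u=(\log F)_{uv}$.

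To complete the Gauss equation I must separately identify the purely metric quantity $-\frac{1}{F}\left(\frac{F_u}{F}\right)_v$ with the intrinsic curvature $K(f)$ of the first fundamental form $ds^2=2F\,du\,dv$. This does not come out of the frame integrability above, which only equates that quantity to the extrinsic expression $1-\frac{ac-b^2}{F^2}$; rather I would compute it directly from the Christoffel symbols of the null metric, whose only nonzero components are $\Gamma^1_{11}=\frac{F_u}{F}$ and $\Gamma^2_{22}=\frac{F_v}{F}$, giving $R_{1212}=F\left(\frac{F_v}{F}\right)_u$ and hence, with the standard curvature sign convention, $K=R_{1212}/\det g=-\frac{1}{F}\left(\frac{F_v}{F}\right)_u$. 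Chaining the three equalities then yields the stated Gauss equation. Finally, for the minimal case I would invoke the Lemma, by which minimality is equivalent to $b\equiv0$; setting $b=0$ in the two Codazzi equations collapses them to $a_v=0$ and $c_u=0$, so that $a=a(u)$ and $c=c(v)$.

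The computations are entirely mechanical once the substitutions are organized, so I expect no serious obstacle beyond careful bookkeeping in the frame expansions. The one point requiring genuine independent verification is the intrinsic identification of $K(f)$ in the previous paragraph. A useful internal consistency check is that the coefficients of $f$ (both equal to $-F_u$, respectively $-F_v$) and of the remaining tangent direction (both equal to $-\frac{ab}{F}$, respectively $-\frac{bc}{F}$) must match automatically; their agreement confirms that the Gauss--Weingarten substitutions were carried out correctly.
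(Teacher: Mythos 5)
Your proposal is correct and takes essentially the same route as the paper: the paper also derives the Gauss and Codazzi equations from the mixed-partial identities $f_{uuv}=f_{uvu}$ and $f_{vvu}=f_{uvv}$, merely extracting the $f_u$- and $\nu$-components by taking inner products with $f_v$ and $\nu$ (equivalent to your coefficient comparison, since in null coordinates $\langle f_u,f_v\rangle = F$ while all other products with $f_v$, resp.\ $\nu$, vanish), and then reads off the minimal case by setting $b\equiv 0$ in Codazzi exactly as you do. Your Christoffel-symbol verification of the intrinsic identity $K(f)=-\frac{1}{F}\left(\frac{F_u}{F}\right)_v$ is a worthwhile supplement to a formula the paper simply takes as known.
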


\begin{proof}
The Gauss curvature equation follows from $\lpr{(f_{uu})_{v}}{f_{v}} = \lpr{(f_{uv})_{u}}{f_{v}}$. Hence, 
$$\left(\frac{F_{u}}{F}\right)_{v} \; F - \frac{ac}{F} \; F = -F \lpr{f_{u}}{f_{v}} + b \frac{-b}{F} F.$$ 
The Codazzi equations follows from $\lpr{(f_{uu})_{v}}{\nu} = \lpr{(f_{uv})_{u}}{\nu}$ and 
$\lpr{(f_{vv})_{u}}{\nu} = \lpr{(f_{uv})_{v}}{\nu}$. Indeed, 
$\lpr{(f_{uu})_{v}}{\nu} = a_{v} + b(F_{u}/F) = \lpr{(f_{uv})_{u}}{\nu} = b_{u}$ and 
$\lpr{(f_{vv})_{u}}{\nu} = b(F_{v}/F) + c_{u} = \lpr{(f_{uv})_{v}}{\nu} = b_{v}.$ 
\end{proof}

\begin{theor}\label{nu}
Let $(M,f)$ be an isotropic surface of $\sitter$ equipped with a non-constant Gauss map $\nu(w)$.
If the surface $f(M)$ is minimal then $(M,\nu)$ has {the same}
isotropic parameters and  is also minimal with
$$\nu_{u}(w) = \frac{-a(w)}{F(w)} f_{v}(w) \; \; \ \ {\rm and } \; \; \ \ \nu_{v}(w) = \frac{-c(w)}{F(w)} f_{u}(w).$$ 
 Moreover, the Gauss curvatures $K(f)$ of 
$f(M)$ and $K(\nu)$ of $\nu(S)$ are related by the equation: 
$$F^2 K(f) + ac K(\nu) = 0.$$ 
Hence, $(M,f)$ is flat if and only if $(M,\nu)$ is flat.
\end{theor}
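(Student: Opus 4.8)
The plan is to exploit the duality between $f$ and $\nu$: I will show that $f$ itself serves as the Gauss map of the surface $(M,\nu)$, so that the structure equations and the Gauss equation already established for an isotropic surface can simply be re-applied with the roles of $f$ and $\nu$ interchanged.

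First I would insert the minimality condition $b\equiv 0$ into the Weingarten equations of the Lemma, which immediately yields $\nu_{u}=-\frac{a}{F}f_{v}$ and $\nu_{v}=-\frac{c}{F}f_{u}$, the stated formulas. Next I would compute the induced metric of $(M,\nu)$. Since the coordinates are isotropic for $f$, we have $\langle f_{u},f_{u}\rangle=\langle f_{v},f_{v}\rangle=0$ and $\langle f_{u},f_{v}\rangle=F$; substituting the formulas above gives $E(\nu)=\langle \nu_{u},\nu_{u}\rangle=0$, $G(\nu)=\langle \nu_{v},\nu_{v}\rangle=0$ and $F(\nu)=\langle \nu_{u},\nu_{v}\rangle=\frac{ac}{F}$. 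Hence $u,v$ are again null coordinates, so $(M,\nu)$ has the same isotropic parameters, with metric $ds^{2}(\nu)=2\frac{ac}{F}\,du\,dv$, provided $ac\neq 0$ so that this metric is non-degenerate.

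To see that $(M,\nu)$ is minimal I would first identify its Gauss map. Differentiating $\langle f,f\rangle=1$ and $\langle \nu,f\rangle=0$ shows that $f\perp f_{u},f_{v},\nu$; since $\nu_{u},\nu_{v}\in \mathrm{Span}\{f_{u},f_{v}\}$, the unit vector orthogonal to $\nu,\nu_{u},\nu_{v}$ is exactly $\pm f$, so $f$ is the Gauss map of $(M,\nu)$ (the sign being fixed by the orientation condition (3) of Definition \ref{1}, and being irrelevant for the curvature since it enters only through products). I would then compute $\nu_{uv}$ by differentiating $\nu_{u}=-\frac{a}{F}f_{v}$ and substituting the Gauss equations $f_{uv}=-Ff$ and $f_{vv}=\frac{F_{v}}{F}f_{v}+c\nu$; the outcome is a combination of $f_{v}$ and $\nu$ with no $f$-component, so the normal coefficient $b(\nu)=\langle \nu_{uv},f\rangle$ vanishes, which by the Lemma is precisely minimality of $(M,\nu)$. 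The $\nu$-coefficient comes out as $-\frac{ac}{F}=-F(\nu)$, consistently confirming $\nu_{uv}=-F(\nu)\nu$.

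For the curvature relation I would apply the Gauss equation of the Corollary to each surface. Computing $\nu_{uu}$ and $\nu_{vv}$ the same way gives $a(\nu)=\langle \nu_{uu},f\rangle=a$ and $c(\nu)=\langle \nu_{vv},f\rangle=c$, whence $K(\nu)=1-\frac{a(\nu)c(\nu)}{F(\nu)^{2}}=1-\frac{F^{2}}{ac}$, while $K(f)=1-\frac{ac}{F^{2}}$. A one-line substitution then gives $F^{2}K(f)+ac\,K(\nu)=(F^{2}-ac)+(ac-F^{2})=0$, and the flatness equivalence follows since each of $K(f)=0$ and $K(\nu)=0$ is equivalent to $F^{2}=ac$. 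The only point genuinely requiring care is the standing assumption $ac\neq 0$: if $a$ or $c$ vanished identically then $\nu_{u}$ or $\nu_{v}$ would vanish and $\nu$ would degenerate to a curve rather than a surface, so this non-degeneracy (guaranteed by $\nu$ being non-constant and non-totally geodesic) is exactly the place where the argument could otherwise break down.
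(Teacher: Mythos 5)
Your proposal is correct, and while the overall duality strategy (make $f$ the Gauss map of $(M,\nu)$ and re-use the structure equations with roles interchanged) is the same as the paper's, the way you establish the curvature identity is genuinely different. The paper works intrinsically: it applies the formula $K=-\frac{1}{F}\left(\frac{F_u}{F}\right)_v$ to $\hat F=\langle\nu_u,\nu_v\rangle=ac/F$ and invokes the Codazzi consequence of minimality, $a=a(u)$ and $c=c(v)$, to get $\bigl(\hat F_u/\hat F\bigr)_v=-\bigl(F_u/F\bigr)_v$, from which the relation $F^2K(f)+ac\,K(\nu)=0$ drops out. You instead work extrinsically: you compute the second fundamental form of $(M,\nu)$ directly from the Gauss equations of $(M,f)$, finding $a(\nu)=\langle\nu_{uu},f\rangle=a$, $b(\nu)=\langle\nu_{uv},f\rangle=0$, $c(\nu)=\langle\nu_{vv},f\rangle=c$, and then apply the other half of the Corollary's Gauss equation, $K=1-\frac{ac-b^2}{F^2}$, to both surfaces. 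Your route buys several things the paper leaves implicit: minimality of $(M,\nu)$ is actually proved (the paper merely asserts ``we see that $(M,\nu)$ is isotropic and minimal''), you get the explicit values $K(f)=1-ac/F^2$ and $K(\nu)=1-F^2/(ac)$, you handle the orientation ambiguity $\pm f$ of the Gauss map of $(M,\nu)$, and you flag the nondegeneracy hypothesis $ac\neq 0$ on which both arguments silently depend (if $a$ or $c$ vanishes, $\nu$ degenerates to a curve and $K(\nu)$ is undefined). The paper's route is shorter once the intrinsic formula and Codazzi are available, and it exhibits the structural fact $(\log\hat F)_{uv}=-(\log F)_{uv}$, which is of independent interest. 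One small inconsistency in your write-up: your asserted conclusion $\nu_{uv}=-F(\nu)\,\nu$ needs the Codazzi relation $a_v=0$ to kill the residual $-\frac{a_v}{F}f_v$ term in your computation, though this does not affect your minimality argument, which only uses the vanishing of the $f$-component.
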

\begin{proof}
Since $f_{u} = (-F/c) \nu_{v}$ and $f_{v} = (-F/a) \nu_{u}$, we see that $(M,\nu)$ is isotropic and minimal. 
If we let $\l \nu_u,\nu_v \r =\hat{F}$, so that the metric tensor of $(M,\nu)$ is $$ds^{2}(\nu) =  2\hat F du dv,$$   it follows that $\hat{F} = ac/F$. Now, from the Codazzi equations, 
we have that $a = a(u)$ and $c = c(v)$. Then taking the $v$ derivative of $(\log(\hat F))_u$ gives us
$$\left(\frac{\hat{F}_{u}}{\hat{F}}\right)_{v} = - \left(\frac{F_{u}}{F}\right)_{v} + \left(\frac{a_{v}}{a}\right)_{u} + 
\left(\frac{c_{u}}{c}\right)_{v} = - \left(\frac{F_{u}}{F}\right)_{v}.$$ 
Now, the formula $F^2 K(f) + ac K(\nu) = 0$ follows from the Gauss equations. 
\end{proof}

\subsection{The equation $\nu(w) = k f(w) + \vec{T}$} 
\begin{lemma}
Assume that the shape operator of the isotropic immersion $f$ is diagonalized but never zero. Then there is a constant vector $\vec{T}$ so that $\nu(w) = k f(w) + \vec{T}$. \end{lemma}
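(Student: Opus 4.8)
The plan is to extract from the hypothesis the two structural facts $a \equiv 0$ and $c \equiv 0$, and then to promote umbilicity to umbilicity \emph{with constant principal curvature} using the Codazzi equations. First I would read off the Weingarten map from the second system in the Lemma: in the null frame $\{f_u, f_v\}$ its matrix is $\frac{1}{F}\begin{pmatrix} b & c \\ a & b \end{pmatrix}$, whose two diagonal entries both equal $b/F$. Requiring this operator to be diagonal in the null frame (that is, $\nu_u$ proportional to $f_u$ and $\nu_v$ proportional to $f_v$) forces the off-diagonal coefficients to vanish, $a = c = 0$; and because the two diagonal entries coincide, being ``diagonalized'' here is in fact equivalent to being umbilic with principal curvature $b/F$. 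The hypothesis ``never zero'' then simply records that $b(w) \neq 0$ for every $w$.

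Next I would substitute $a = c = 0$ into the Codazzi equations of the Corollary, $b_u - a_v = b\,F_u/F$ and $b_v - c_u = b\,F_v/F$, which collapse to $b_u = b\,F_u/F$ and $b_v = b\,F_v/F$. Dividing by $b$ (legitimate since $b$ never vanishes) gives $(\log|b|)_u = (\log|F|)_u$ and $(\log|b|)_v = (\log|F|)_v$, so $\log|b/F|$ has vanishing gradient. Since $M$ is connected, $b/F$ is a nonzero constant, and I set $k := -b/F$.

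Finally, with $a = c = 0$ the Weingarten equations reduce to $\nu_u = k f_u$ and $\nu_v = k f_v$. Because $k$ is constant, $\partial_u(\nu - k f) = \nu_u - k f_u = 0$ and likewise $\partial_v(\nu - k f) = 0$; connectedness of $M$ then yields that $\nu - kf$ equals a fixed vector $\vec{T} \in \mink$, which is exactly the asserted identity $\nu = kf + \vec{T}$.

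I expect the main obstacle to be conceptual rather than computational: recognizing that in the null frame ``diagonal'' already implies ``scalar,'' and, more importantly, that the upgrade from umbilicity to the constancy of $k$ is precisely what the Codazzi equations supply. Everything downstream is a one-line integration, and the constancy of $k$ is the one ingredient without which $\nu - kf$ would fail to be a constant vector; note also that the hypothesis ``never zero'' is used twice, once to divide by $b$ and once to guarantee $k \neq 0$.
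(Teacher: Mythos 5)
Your proposal is correct and follows essentially the same route as the paper: interpret the hypothesis as $a = c = 0$ with $b \neq 0$, use the Codazzi equations to conclude $b/F$ is a nonzero constant $-k$, and then integrate the Weingarten equations $\nu_u = k f_u$, $\nu_v = k f_v$ over connected $M$. Your added remarks (diagonal implies scalar in the null frame, the two uses of ``never zero'') are sound elaborations of the same argument; the paper's only extra observation is that $\vec{T}$ is necessarily nonzero since $\{f,\nu\}$ is an orthonormal normal frame.
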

\begin{proof} We are assuming that   $a = 0 = c$ but $b \neq 0$ in  the Weingarten 
equations. From the Codazzi equations we have 
$$\frac{b_{u}}{b} = \frac{F_{u}}{F} \; \; \mbox{ and } \; \; \frac{b_{v}}{b} = \frac{F_{v}}{F} \; \; 
\mbox{ thus } \; \; \frac{b(w)}{F(w)} =  {-}k,$$ 
for some real number $k \neq 0$. Therefore, coming back to Weingarten equations we have
$$\nu_{u} = k f_{u} \; \; \mbox{ and } \; \; \nu_{v} =  k f_{v} \; \; \mbox{ thus } \; \; \nu(w) - k f(w) = \vec{T} \in \mink 
\setminus \{0\}.$$  
Note that $\vec{T}$ can not be $0$, because we are assuming that $\{f(w),\nu(w)\}$ is a pointwise orthonormal basis of the 
normal bundle of $(M,f)$. \end{proof}

The following example shows that there exist non-minimal surfaces $(M,f)$ and $(M,\nu)$ sharing isotropic parameters. 
  
\begin{example}
Let $X(w) = (X^{1}(w), X^{2}(w), X^{3}(w), 0)$ be an isotropic parametrization of an open subset of the sphere 
$\{X \in \sitter |  X^{4} = 0\}$. Defining for $\theta \in ]0, \pi/2[$ and $w \in M = dom(X)$
$$\nu(w) = \cos \theta \; \vec{e}_{4} - \sin \theta \; X(w) \; \; \mbox{ and } \; \; 
f(w) = \sin \theta \; \vec{e}_{4} + \cos \theta \; X(w),$$ 
we have for $k = - \tan \theta$ and $\vec{T} = \sec \theta \; \vec{e}_{4}$ a solution of the equation $\nu(w) = k f(w) + \vec{T}$. 
\end{example}

\begin{theor}
Let $(M,f)$ be an isotropic surface of $\sitter$ equipped with the non-constant Gauss map $\nu(w)$. 
If $(M,f)$ and $(M,\nu)$ are isotropic solutions of the equation $\nu(w) = k f(w) + \vec{T}$, then there exists a basis for $\mink$, $\{\vec{t}_{1}, \vec{t}_{2}, \vec{t}_{3}, \vec{t}_{4}\}$, 
for which $k = - \tan \theta$ and $\vec{T} = \sec \theta \vec{t}_{4}$ for some $\theta \in ]0,\pi/2[$. In addition, there exists the isotropic 
parametrization $(M,X)$ of the open subset $\{X = X^{1} \vec{t}_{1} + X^{2} \vec{t}_{2} + X^{3} \vec{t}_{3} |  \lpr{X}{X} = 1\}$ 
for which the solution of \  $\nu(w) = k f(w) + \vec{T}$ \ is 
$$\nu(w) = \cos \theta \; \vec{t}_{4} - \sin \theta \; X(w) \; \; \mbox{ and } \; \; 
f(w) = \sin \theta \; \vec{t}_{4} + \cos \theta \; X(w).$$ 

The Gauss curvatures are $K(f)  = sec^2 (\theta)$  and $K(\nu) = \csc^2(\theta)$.  %{\color{red} I got that $K(\nu)=\csc^{2}\theta$})
\end{theor}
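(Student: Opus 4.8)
The plan is to show that the constant data $(k,\vec T)$ rigidly determine the geometry: $\vec T$ singles out one fixed spacelike direction $\vec t_4$, and once this direction is split off, both $f$ and $\nu$ become constant-angle ``liftings'' of a single spherical surface $X$ living in the orthogonal complement $\vec t_4^{\perp}$. Since the hypothesis places us in the situation of the preceding Lemma (shape operator diagonalized but never zero, so $k$ is a nonzero real constant and $\vec T$ is a constant vector), all the work is in constructing the basis and the surface $X$ and then reading off the two curvatures.

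First I would extract the algebraic constraints on $\vec T$ by pairing the relation $\nu = k f + \vec T$ with $f$ and with $\nu$, using $\lpr{f}{f} = \lpr{\nu}{\nu} = 1$ and $\lpr{f}{\nu} = 0$. This gives $\lpr{\vec T}{f} = -k$, $\lpr{\vec T}{\nu} = 1$, and $\lpr{\vec T}{\vec T} = 1 + k^2 > 0$, so $\vec T$ is a nonzero constant spacelike vector. I then define $\theta \in \,]0,\pi/2[$ by $\tan\theta = -k$ (the sign $k<0$ being forced by the future-directed/orientation conventions exactly as in the preceding Lemma and the Example), so that $\sqrt{1+k^2} = \sec\theta$; I set $\vec t_4 = (\cos\theta)\,\vec T$, a unit spacelike vector with $\vec T = (\sec\theta)\,\vec t_4$, and complete it to an orthonormal basis $\{\vec t_1, \vec t_2, \vec t_3, \vec t_4\}$ of $\mink$ with $\vec t_1, \vec t_2, \vec t_3$ spanning the Lorentzian hyperplane $\vec t_4^{\perp}$.

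Next I would define the candidate spherical surface by $X := (\sec\theta)\,(f - (\sin\theta)\,\vec t_4)$, equivalently $f = (\sin\theta)\,\vec t_4 + (\cos\theta)\,X$. Using $\lpr{f}{\vec t_4} = -k\cos\theta = \sin\theta$ I check that $\lpr{X}{\vec t_4} = 0$, i.e. $X$ takes values in $\vec t_4^{\perp} = \mathrm{span}\{\vec t_1, \vec t_2, \vec t_3\}$, and from $\lpr{f}{f}=1$ together with $\lpr{X}{\vec t_4}=0$ I get $\lpr{X}{X} = 1$; thus $X$ parametrizes an open subset of $\{X^1\vec t_1 + X^2\vec t_2 + X^3\vec t_3 : \lpr{X}{X}=1\}$, a totally geodesic de Sitter $2$-space of constant Gauss curvature $1$. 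Substituting $f$ into $\nu = kf + \vec T$ and simplifying with $k = -\tan\theta$ and $\vec T = (\sec\theta)\,\vec t_4$ then yields exactly $\nu = (\cos\theta)\,\vec t_4 - (\sin\theta)\,X$.

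Finally, since $\vec t_4$ is constant we have $f_u = (\cos\theta)X_u$, $f_v = (\cos\theta)X_v$ and $\nu_u = -(\sin\theta)X_u$, $\nu_v = -(\sin\theta)X_v$, so the three induced metrics are related by constant homothety: $ds^2(f) = \cos^2\theta\, ds^2(X)$ and $ds^2(\nu) = \sin^2\theta\, ds^2(X)$. In particular $X$ inherits the null coordinates of $f$, so $(M,X)$ is isotropic, and $X_u, X_v$ are independent because $f$ is an immersion and $\cos\theta \neq 0$. Because a constant rescaling $g \mapsto \lambda^2 g$ divides Gauss curvature by $\lambda^2$, and $X$ has intrinsic curvature $1$, I obtain $K(f) = 1/\cos^2\theta = \sec^2\theta$ and $K(\nu) = 1/\sin^2\theta = \csc^2\theta$. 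The step I expect to require the most care is fixing the sign of $k$, hence the admissible range of $\theta$, consistently with the future-directed null frame and the positive orientation of Definition \ref{1}; the remaining identities are routine once $\vec t_4$ and $X$ are in place.
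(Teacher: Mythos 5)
The paper states this theorem with no proof at all --- the theorem is followed immediately by the Clifford-torus-type example --- so there is no argument of the authors' to compare yours against; your proposal fills that gap, and it does so correctly. Your route (extract $\lpr{\vec{T}}{f} = -k$, $\lpr{\vec{T}}{\nu} = 1$, $\lpr{\vec{T}}{\vec{T}} = 1+k^{2}$ from the relation; set $\vec{t}_{4} = \cos\theta\,\vec{T}$; define $X = \sec\theta\,(f - \sin\theta\,\vec{t}_{4})$; verify $\lpr{X}{\vec{t}_{4}} = 0$, $\lpr{X}{X} = 1$ and $\nu = \cos\theta\,\vec{t}_{4} - \sin\theta\,X$; then read the curvatures off the constant homotheties $ds^{2}(f) = \cos^{2}\theta\,ds^{2}(X)$ and $ds^{2}(\nu) = \sin^{2}\theta\,ds^{2}(X)$) is exactly the natural inversion of the paper's preceding Example, and the computations all check out. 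As a cross-check, the paper's own Gauss equation gives the curvatures instantly: with $a = c = 0$ and $b/F = -k$ one has $K(f) = 1 + b^{2}/F^{2} = 1 + k^{2} = \sec^{2}\theta$, and since $f = (1/k)(\nu - \vec{T})$ the same argument applied to $(M,\nu)$ gives $K(\nu) = 1 + 1/k^{2} = \csc^{2}\theta$.

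The one assertion I would not let stand is the parenthetical claim that $k<0$ is ``forced by the future-directed/orientation conventions.'' It is not --- if anything the conventions pull the other way. Differentiating $\nu = kf + \vec{T}$ gives $\nu_{u} = k f_{u}$ and $\nu_{v} = k f_{v}$; so if the null frames of \emph{both} $(M,f)$ and $(M,\nu)$ were future directed, as the paper's definition of an isotropic surface literally requires, this would force $k>0$, the opposite sign. The restriction $k<0$ (equivalently $\theta \in\, ]0,\pi/2[$) is inherited from the paper's Example, in which $\nu_{u} = -\sin\theta\,X_{u}$ is in fact past directed, i.e.\ ``isotropic'' is being read loosely (null coordinates only) for $(M,\nu)$. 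Your construction is fine for $k<0$, which is the only case the stated normal form can accommodate: since $\vec{T} = \sec\theta\,\vec{t}_{4}$ with $\sec\theta>0$ pins $\vec{t}_{4}$ to the direction of $\vec{T}$, a configuration with $k>0$ cannot be brought to the form $k = -\tan\theta$ with $\theta \in\, ]0,\pi/2[$; for $k>0$ the identical construction works with $\theta \in\, ]-\pi/2,0[$. So replace the ``forced by conventions'' remark with an explicit restriction to $k<0$ (or add the $k>0$ variant); as written, that sentence asserts something false, even though it is a defect the theorem's own statement shares.
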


Next we will give an example of a  timelike minimal surface with non-null Gauss curvature, together with a coordinate transformations which allows us to obtain an equivalent isotropic surface. This example is a type of Clifford torus for $\sitter$. The (unique) 
coordinate transformations also forces $(M,\nu)$ to have isotropic parameter by Theorem \ref{nu}.

\begin{example} 
Let 
$$c_1(t) = (\sinh t,0,0,\cosh t) \; \; \mbox{ and } \; \; c_2(s) = (0,\cos s, \sin s,0)$$ 
be two curves of $\sitter$, being the first a timelike curve and the second a spacelike curve. 
Taking the two-parameter map 
$$X(x,y) = \cos x \; c_1(y) + \sin x \; c_2(y)$$ 
we have $X_x = -\sin x \; c_(y) + \cos x \; c_2(y)$ and $X_y = \cos x \; c_1'(y) + \sin x \; c_2'(y)$. Thus the metric tensor is 
$E(x,y) = 1$ and $F(x,y) = 0$ and $G(x,y) = -\cos^{2} x + \sin^{2} x = - \cos 2x$.  
The unitary normal is given by: 
$$\nu(x,y) = \frac{1}{\sqrt{\cos 2 x}} (\sin x \; c_1'(y) + \cos x \; c_2'(y)).$$ 
Since $X_{xx} = - X$ and $X_{yy} = \cos x \; c_1(y) - \sin x \; c_2(y)$ and $X_{xy} = -\sin x \; c_1'(y) + \cos x \; c_2'(y)$, 
the second quadratic form is $\Psi_{ij} = \lpr{D_{ij}X}{X} X + \lpr{D_{ij}X}{\nu} \nu = X_{ij} X + N_{ij} \nu$ \ or  
$$[\Psi_{ij}] = \left[\begin{matrix}
-1 & 0 \\ 0 & \cos 2 x
\end{matrix} \right] X + \left[\begin{matrix}
0 & 1/\sqrt{\cos 2x} \\ 1/\sqrt{\cos 2x} & 0
\end{matrix} \right] \nu.$$ 
Therefore 
$$[\Psi_{i}^{j}] = \left[\begin{matrix}
-1 & 0 \\ 0 & -1
\end{matrix} \right] X + \left[\begin{matrix}
0 & -1/\sqrt{\cos^{3} 2x} \\ 1/\sqrt{\cos 2x} & 0
\end{matrix} \right] \nu.$$ 
$$H = trace(\Psi) = -X \; \; \mbox{ and } \; \; 
K(f) = \det(X_{i}^{j}) + \det(N_{i}^{j}) = 1 + \sec^{2} 2x.$$ 
Here $H$ is the mean curvature vector of the immersion into $\mink$.

Define the coordinate transformations $p = p(x)$ and $q(y) = y$ and take $Y(p,q) = X(x(p),y(q))$. Then we have that the metric coefficients for  $Y$ are given by

{$$ \overline{E}(p,q) = x'(p)^2, \; \; \overline{F}(p,q) = 0 \; \; \ \ {\rm and } \; \; \ \ \overline{G}(p,q) = -\cos(2x(p)),$$}
 
%{\color{red} I still get $\l Y_p,Y_p \r=x'(p)^2$, $\l Y_p,Y_q \r=0,$ $\l Y_q,Y_q \r=-\cos(2x(p))y'(q)^2.$ I changed the line above.}
Take 
%$$\int \frac{dp}{\sqrt{\cos 2p}} = x(p) \; \; \mbox{ we let  p(x) be its inverse}.$$ 
 $$\int \frac{dx}{\sqrt{\cos 2x}} = p(x).$$
Now setting $u = p + q$ and $v = p - q$ we obtain the equivalent surface $(M,f)$ where $f(u,v) = Y(p+q,p-q)$ 
equipped with isotropic parameters. 
\end{example}

\section{An integration problem} 
In this section we look for conditions which allows us to find a representation formula for the isotropic surfaces.
We start identifying local representations for   lightlike vectors $L$  which are in the tangent spaces.  Moreover we identify 
orthogonal complements of the tangent spaces  together the complex and real quadric of $\mathbb CP^3$ corresponding
to set of spacelike and timelike oriented planes of $\mathbb R^4_1$.

\vspace{0.2cm}
If $L = (L^{1},L^{2},L^{3},L^{4})$ is a future directed lightlike vector with $L^{1} > 0$, then there exists an unique vector $\vec n \in \real^{3} = Span\{\vec e_{2}, \vec e_{3}, \vec e_{4}\}$ such that 
$$L = L^{1}(\vec e_{1} + \vec n) \; \; \ \ {\rm where } \; \;  \ \ \vec n = (0, L^{2}/L^{1},L^{3}/L^{1},L^{4}/L^{1}).$$ 
Since $\lpr{L}{L} = 0$ we have $\lpr{\vec n}{\vec n} = 1$. Let $North = (0,0,0,1)$ and define  stereographic projection $st$, 
by 
$$st(L) = a + ib = \left(\frac{L^{2}/L^{1} + i L^{3}/L^{1}}{1 - L^{4}/L^{1}} \right) = \frac{L^{2} + i L^{3}}{L^{1} - L^{4}} 
\in \complex \cup\{\infty\}$$
where $st(L) = \infty$ if and only if $L = {\mu}(1,0,0,1)$. Moreover, $st(L) = 0$ if and only if $L =  {\mu}(1,0,0,-1)$, with  {$\mu>0$.}

\begin{prop}\label{39}
For each isotropic plane $Span\{L_{1},L_{2}\} \subset \mink$ there exists an unique ordered pair 
$(x,y) \in (\complex \cup\{\infty\})^{2}$, such that  we can express, for $\mu_1, \mu_2>0,$
$$\mu_1 L_{1} =\tilde L_1= \nlight{x} \; \; \mbox{so that } \; \; x = st(L_{1}),$$
$$\mu_2 L_{2} =\tilde L_2= \nlight{y} \; \; \mbox{so that } \; \; y = st(L_{2}).$$

%$$L_{1} = \frac{L_{1}^{1}}{1 + x \overline{x}}\nlight{x} \; \; \mbox{so that } \; \; x = st(L_{1}),$$
%$$L_{2} = \frac{L_{2}^{1}}{1 + y \overline{y}}\nlight{y} \; \; \mbox{so that } \; \; y = st(L_{2}).$$ 

Therefore $\lpr{\tilde L_{1}}{\tilde L_{2}} = -2 \vert x - y \vert^{2}.$ The map $\F$ from the set of oriented isotropic planes in the 
square of the Riemann sphere $(\complex \cup\{\infty\})^{2}$ given by 
$$\F(Span\{L_{1},L_{2}\}) = (st(L_{1}),st(L_{2}))$$ 
is one-to-one and onto the open subset  
$(\complex \cup\{\infty\})^{2} \setminus \{(x,x) | \ x \in \complex \cup\{\infty\}\}.$
\end{prop}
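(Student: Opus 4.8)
The plan is to treat the two null generators of the plane one at a time: establish a canonical positive normalization of each null vector, compute the pairing by a direct expansion, and finally recognize $\F$ as a composition of two bijections. First I would prove existence and uniqueness of the scalars $\mu_i$, hence of $(x,y)$. Fix a future directed lightlike $L$ with $L^{1}>0$. From $\lpr{\vec n}{\vec n}=1$ one gets $(L^{2})^{2}+(L^{3})^{2}+(L^{4})^{2}=(L^{1})^{2}$, so $|L^{4}|\le L^{1}$ and thus $L^{1}-L^{4}\ge 0$, with equality exactly when $L=\mu(1,0,0,1)$, i.e. when $st(L)=\infty$. In the finite case, writing $x=st(L)=a+ib$ and $r=|x|^{2}$, the null condition gives $(L^{2})^{2}+(L^{3})^{2}=(L^{1}-L^{4})(L^{1}+L^{4})$, whence $r=(L^{1}+L^{4})/(L^{1}-L^{4})$. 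Setting $\mu=2/(L^{1}-L^{4})>0$ then yields $\mu L^{1}=1+r$, $\mu L^{2}=2a=x+\overline{x}$, $\mu L^{3}=2b=-i(x-\overline{x})$ and $\mu L^{4}=-1+r$, that is $\mu L=\nlight{x}$. Since $x=st(L)$ is invariant under positive rescaling of $L$ and already fixes the first coordinate $1+r$ of the normalized vector, both $\mu$ and $x$ are determined by the ray $\real_{>0}L$; applying this to $L_{1}$ and $L_{2}$ produces the unique pair $(x,y)$.

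The pairing formula is then a direct computation. Writing $p=x\overline{x}$, $q=y\overline{y}$ and expanding $\lpr{\nlight{x}}{\nlight{y}}$ in the Minkowski metric, the first and fourth coordinates contribute $-2(p+q)$ while the second and third contribute $2(x\overline{y}+\overline{x}y)$; summing gives $-2(|x|^{2}+|y|^{2}-x\overline{y}-\overline{x}y)=-2|x-y|^{2}$. In particular $\tilde L_{1}$ and $\tilde L_{2}$ are linearly independent, so $\mathrm{Span}\{L_{1},L_{2}\}$ is a genuine (nondegenerate, timelike) plane, precisely when $x\ne y$; this is what forces the diagonal to be removed.

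Finally I would identify $\F$ as a composition of bijections. The map $st$ is the classical stereographic projection of the unit sphere $\vec n\in S^{2}$ from the north pole, hence a bijection from the set of future directed null rays onto $\complex\cup\{\infty\}$, with inverse given by the ray through $\nlight{x}$ for finite $x$ and through $(1,0,0,1)$ for $x=\infty$. Since a timelike $2$-plane meets the null cone in exactly two distinct null lines, an oriented isotropic plane is the same datum as an ordered pair of distinct future directed null rays, the orientation of the plane (the class of $L_{1}\wedge L_{2}$ in $\wedge^{2}\mink$) fixing the ordering. Thus $\F$ factors as $\mathrm{Span}\{L_{1},L_{2}\}\mapsto(\real_{>0}L_{1},\real_{>0}L_{2})\mapsto(st(L_{1}),st(L_{2}))$. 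Well-definedness follows from the scale invariance of $st$, injectivity from recovering the two null lines from $(x,y)$, and surjectivity onto the off-diagonal from the construction of the previous paragraph applied to any $(x,y)$ with $x\ne y$; the value $x=y$ is excluded exactly because the pairing formula then forces $\tilde L_{1}\parallel\tilde L_{2}$.

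The hard part is the geometric bookkeeping of the last paragraph rather than any calculation. One must check that every timelike plane contains exactly two null directions, so that the ``ordered pair of null rays'' description is faithful; that the plane orientation corresponds consistently to the ordering of $L_{1}$ and $L_{2}$ (and is unchanged under positive rescaling of the generators); and that the points at infinity are handled uniformly in all three claims. Once the bijectivity of $st$ on future directed null rays is established, these verifications are clean but must be carried out with care.
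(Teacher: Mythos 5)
Your proposal is correct, but there is nothing in the paper to compare it against: the paper states Proposition \ref{39} without proof, treating it as an immediate consequence of the stereographic-projection setup in the preceding paragraph. Your argument supplies exactly the missing details, and the computations check out: the normalization $\mu = 2/(L^{1}-L^{4})$ does give $\mu L = \nlight{x}$ with $x = st(L)$ (and $r = |x|^{2} = (L^{1}+L^{4})/(L^{1}-L^{4})$ follows correctly from the null condition); the expansion of $\lpr{\nlight{x}}{\nlight{y}}$ into the contributions $-2(|x|^{2}+|y|^{2})$ and $2(x\overline{y}+\overline{x}y)$ is right; and the factorization of $\F$ through ordered pairs of future-directed null rays is the correct way to organize the bijectivity. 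Two points you flag as bookkeeping are indeed the only places where something must be said: that a timelike $2$-plane meets the light cone in exactly two null lines (restrict the metric to the plane and note it is Lorentzian of signature $(1,1)$), and that two future-directed null vectors are proportional if and only if their inner product vanishes — this last fact is what simultaneously gives injectivity, shows the image avoids the diagonal, and shows the diagonal must be removed. Your uniqueness argument could be stated slightly more directly by observing that $st(\nlight{x}) = x$, so any representation $\mu L = \nlight{x}$ forces $x = st(L)$ and then $\mu = (1+|x|^{2})/L^{1}$; this is essentially what you say via scale-invariance of $st$.
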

At this point, with a slight abuse of notation we define $$L(x)=\nlight{x}.$$
\subsection{The orthogonal complement $[L_{1},L_{2}]^{\perp} = [W] \in \complex P^{3}$} \ 

\vspace{0.2cm}
Let $\clpr{...}{...}$ be the natural extension of the Lorentz inner product to $\complex^{4}$
and  $\mink = T \oplus S$ be a direct sum of a timelike plane $T = Span\{L_{1},L_{2}\}$ and a spacelike plane 
$S = Span\{X,Y\}$, where we assume that \\
(1) the lightlike vectors $L_{1}$ and $L_{2}$ are future directed. \\
(2) the ordered set $\{X,L_{1},L_{2},Y\}$ is a positive basis of $\mink$ obeying the relations: 
$$\lpr{X}{X} = \lpr{Y}{Y} >0, \; \; \lpr{X}{Y} = 0, \; \; \lpr{X}{L_{i}} = 0 = \lpr{Y}{L_{i}} \; \; i = 1,2.$$ 

\vspace{0.2cm}
Next we define the Grassmannians of the spacelike oriented planes and timelike oriented planes of $\mink$ within the complex projective space 
$\complex P^{3} = \complex^{4}/_{\equiv}$ as follows. 

\vspace{0.1cm}
If $\mu = a + ib \neq 0$ is a complex number and $Z = X + i Y$ is the complex vector associated to the basis of the spacelike plane 
$S$, then $\mu Z = (aX - bY) + i(bX + aY)$ gives us another basis of $S$ satisfying the condition (2) above. 
By definition we have 
$[Z] = [X + i Y] = \{ \mu Z | \mu \in \complex \; \; \mbox{ and } \; \; \mu \neq 0\}$
are the  equivalence classes that define  points of $\grass$. Now, taking the complex vector 
$T = L_{1} + i L_{2}$ associated to a timelike plane, and a complex number $\mu = a + ib \neq 0$, 
we have the complex vector $A + i B = \mu T = (aL_{1} - b L_{2}) + i (b L_{1} + a L_{2})$ satisfying 
$$\lpr{A}{A} = -2ab \lpr{L_{1}}{L_{2}} = - \lpr{B}{B} \; \; \mbox{ and } \; \; 
\lpr{A}{B} = (a^{2} - b^{2}) \lpr{L_{1}}{L_{2}}.$$  
Therefore, $\{A,B\}$ is also  a basis of timelike plane $T,$ and the determinant of the matrix associated to this  basis is 
$- \lpr{L_{1}}{L_{2}}^2\vert \mu \vert^{2} < 0$. Then we define: 

\begin{dfn}
$$Q_{space} = \{[Z] \in \complex P^{3} | \clpr{Z}{Z} = 0 \;  \mbox{ and }  \; \clpr{Z}{\overline{Z}} > 0\},$$ 
the complex quadric of $\complex P^{3}$ of the set of spacelike oriented planes of $\mink$. 

$$Q_{time} = \{[Z] \in \complex P^{3} | \clpr{Z}{\overline{Z}} = 0 \;  \mbox{ and } \; \clpr{Z}{Z} \neq 0\},$$ 
the real quadric of $\complex P^{3}$ of the set of timelike oriented planes of $\mink$. 
\end{dfn} 

Now we will obtain a important correspondence between $Q_{space}$ and  $Q_{time}$.  First we consider homogeneous coordinates for $Q_{space}$. 

Given $x,y \in \complex$, with $x \neq y$, let  
\begin{equation}\label{defiw}
W(x,y) = \weis{x}{y} \in \complex^{4}.
\end{equation}

For $x \in \complex$ and $y = \infty$ or for $x = \infty$ and $y \in \complex$ we set 
$W(x,\infty) = (x,1,i,x)$ or $W(\infty,y) =(\overline{y},1,-i,\overline{y})$.

\begin{prop}
Given the isotropic plane $Span\{L_{1},L_{2}\},$ let $W(x,y)$ be the complex vector (\ref{defiw}). Then 
$\clpr{W(x,y)}{L_{1}} = 0$ and $\clpr{W(x,y)}{L_{2}} = 0$ if, and only if $x = st(L_{1})$ and $y = st(L_{2})$ or  $y = st(L_{1})$ and $x = st(L_{2})$. \ Moreover $$\clpr{W(x,y)}{W(x,y)} = 0 \; \; \mbox{ and }\; \; \clpr{W(x,y)}{\overline{W(x,y)}} = - \lpr{ L(x)}{ L(y)} = 
2 \vert x - y \vert^{2}>0.$$ 
Hence, there exists the bijection $\F : Q_{time} \longrightarrow Q_{space}$ with $$\F([L_{1} + i L_{2}]) = [W(x,y)].$$  
  
\end{prop}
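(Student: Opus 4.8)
The plan is to reduce every assertion to a single bilinear identity. First I would record two elementary facts. Writing $L(z)$ for the lightlike vector defined just above the statement, one checks $W(z,z)=L(z)$, and a direct expansion of the complex bilinear form gives the master formula
$$\clpr{W(x,y)}{W(x',y')} = -2(x-x')\,\overline{(y-y')}.$$
This is routine: the first and fourth coordinates of $\clpr{\cdot}{\cdot}$ contribute $-2x\bar y-2x'\bar y'$, the second and third contribute $+2x\bar y'+2x'\bar y$, and the two sums combine into the stated product. Alongside it I would note the conjugation identity $\overline{W(x,y)}=W(y,x)$, obtained by comparing coordinates. Everything in the statement is then a specialization of these two facts, and the boundary cases $x=\infty$ or $y=\infty$ follow by passing to the limits that produce the prescribed vectors $W(x,\infty)=(x,1,i,x)$ and $W(\infty,y)=(\overline{y},1,-i,\overline{y})$, under which the master formula persists by continuity.

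Next I would extract the pairing assertions. Setting $x'=y'=z$ gives $\clpr{W(x,y)}{L(z)}=-2(x-z)\overline{(y-z)}$, which vanishes exactly when $z\in\{x,y\}$. Since $\mu_iL_i=L(st(L_i))$ with $\mu_i>0$ and $\clpr{\cdot}{\cdot}$ is bilinear, $\clpr{W(x,y)}{L_i}=0$ if and only if $st(L_i)\in\{x,y\}$. By Proposition~\ref{39} the isotropic plane forces $st(L_1)\neq st(L_2)$, so both pairings vanish precisely when $\{x,y\}=\{st(L_1),st(L_2)\}$, i.e.\ $(x,y)=(st(L_1),st(L_2))$ or $(st(L_2),st(L_1))$; this is the claimed equivalence. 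Putting $(x',y')=(x,y)$ gives $\clpr{W(x,y)}{W(x,y)}=0$, and $\overline{W(x,y)}=W(y,x)$ gives $\clpr{W(x,y)}{\overline{W(x,y)}}=-2(x-y)\overline{(y-x)}=2|x-y|^2$, which equals $-\lpr{L(x)}{L(y)}$ by the master formula applied to $W(x,x),W(y,y)$ and is strictly positive for $x\neq y$. Hence $[W(x,y)]$ meets the two defining conditions of $Q_{space}$.

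For the bijection I would compose two maps. Proposition~\ref{39} already identifies oriented isotropic (timelike) planes with $(\complex\cup\{\infty\})^2$ minus its diagonal via $Span\{L_1,L_2\}\mapsto(st(L_1),st(L_2))=(x,y)$, so it suffices to prove that $(x,y)\mapsto[W(x,y)]$ is a bijection onto $Q_{space}$. Injectivity follows from the paragraph above: if $[W(x,y)]=[W(x',y')]$ then the two representatives are orthogonal to the same null directions, forcing $\{x,y\}=\{x',y'\}$, while $[\overline{W(x,y)}]=[W(y,x)]\neq[W(x,y)]$ because the real and imaginary parts of $W(x,y)$ are linearly independent (they span a genuine spacelike plane), so the ordered pair is recovered. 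For surjectivity, given $[Z]\in Q_{space}$ I would write $Z=X+iY$, set $S=Span\{X,Y\}$, take $T=S^{\perp}$, which is timelike with two future-directed null directions whose stereographic images are some $(x,y)$; since $W(x,y)\perp T$ and $[W(x,y)]\in Q_{space}$, the vector $W(x,y)$ lies in $S_{\complex}$ and therefore $[W(x,y)]=[Z]$. Composing the two identifications produces $\F([L_1+iL_2])=[W(x,y)]$.

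The main obstacle is the well-definedness and orientation bookkeeping of this last step, not the algebra. A point $[L_1+iL_2]\in Q_{time}$ carries no canonical null generators, and rescaling $L_1,L_2$ independently changes the representative in $\grass$ while leaving $(st(L_1),st(L_2))$ fixed; so $\F$ must be read through the normalization $\mu_iL_i=L(st(L_i))$ of Proposition~\ref{39}, equivalently as a map on oriented timelike planes. One then has to verify that the orientation induced on $S^{\perp}$ by that of $Span\{X,Y\}$ is exactly the one producing the ordering $(x,y)$ rather than $(y,x)$, so that injectivity and surjectivity fit together. The remaining work is purely notational: checking that the limiting vectors $W(x,\infty)$ and $W(\infty,y)$ keep the correspondence valid at the points where $x$ or $y$ equals $\infty$.
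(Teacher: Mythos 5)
Your proposal is correct, and it fills a hole rather than duplicating an argument: the paper states this proposition with no proof at all, treating it as a direct verification, and the identities it silently relies on (such as $\overline{W(x,y)}=W(y,x)$ and $\clpr{W_w}{L(x)}=2(\overline{x}-\overline{y})x_w$) only surface later, in the subsection on the complex basis $c_1,\dots,c_4$, where the anti-diagonal Gram matrix makes them immediate. Your single master identity $\clpr{W(x,y)}{W(x',y')}=-2(x-x')\,\overline{(y-y')}$ is correct (the coordinate expansion checks out) and, combined with $W(z,z)=L(z)$ and $\overline{W(x,y)}=W(y,x)$, it yields every pairing claim of the statement at once; this is equivalent in content to the paper's later basis computation but more economical. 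Your injectivity argument --- orthogonality to the null directions recovers $\{x,y\}$, and $[W(y,x)]\neq[W(x,y)]$ because the real and imaginary parts of $W$ span a genuine spacelike plane --- is exactly right.

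Your closing caveat deserves emphasis, because it is not mere bookkeeping: the bijection claim is false for $Q_{time}$ as literally defined. The condition $\clpr{Z}{\overline{Z}}=0$ is a single real equation, so $Q_{time}$ is a real hypersurface of $\grass$ (five real dimensions), while $Q_{space}$ is a complex quadric surface (four). Concretely, $[L_1+iL_2]$ and $[2L_1+iL_2]$ are distinct points of $Q_{time}$ --- no complex scalar carries one vector to the other --- yet both are assigned the same image $[W(st(L_1),st(L_2))]$, since stereographic projection is insensitive to separate positive rescalings of $L_1$ and $L_2$; the fibers of $\F$ are the one-parameter families $[\lambda L_1+i\rho L_2]$, $\lambda,\rho>0$. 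So on the literal $Q_{time}$ the map is well defined and surjective but many-to-one; it becomes a bijection exactly when read as you read it, through Proposition \ref{39}, on ordered pairs $(x,y)$ with $x\neq y$, equivalently on oriented timelike planes with normalized null generators. With that reading, the only loose end in your surjectivity step --- ``therefore $[W(x,y)]=[Z]$'' --- closes by noting that the two-dimensional complex quadratic space $S_{\complex}$ has exactly two isotropic lines, namely $[W(x,y)]$ and $[W(y,x)]$, and both lie in the image of the pair map; the orientation matching you postpone only decides which of the two ordered pairs hits $[Z]$, not surjectivity itself.
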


\vspace{0.2cm}

\subsection{\bf An integration problem}

Let $M$ be a connected and simply connected open subset of $\complex$ and let $w = (u,v) = u + iv \in M$ denote its points. 
Given two smooth functions $A, B: M \to \real$, there exists another two smooth functions $a, b: M \to \real$ such that 
$\Gamma = a A du + b B dv$ is a closed $1$-form if and only if $a_{v} A - b_{u} B = -a A_{v} + b B_{u}.$  {$d\Gamma =0$ is the definition of closed.}

Since $M$ is assumed to be a connected and simply connected open subset, it follows that if the form $\Gamma$ is closed then there is a smooth function $\varphi: M \to \real$ such that $d \varphi = a A du + b B dv.$

\vspace{0.2cm}
Next we will apply this last fact to vector fields along $M$. First suppose that 
$$V(w) = (\varphi^{1}(w), \varphi^{2}(w), \varphi^{3}(w), \varphi^{4}(w))$$
is a smooth vector field along $M$ such that  $\{V_{u}(w),V_{v}(w)\}_{w \in M}$ is a set of lightlike vectors which is linearly independent. 
Therefore,  
there exist complex functions $x,y$ and real valued functions $\alpha, \beta$ such that 
$$ V_u(w) = \alpha(w) L(x(w)) \; \; \mbox{ and } \; \;  V_v(w)  = \beta(w) L(y(w)),$$ 
where $\lpr{L(x)}{L(y)} = -2\vert x - y \vert^{2} \neq 0$. In coordinates, if we take $L = (L^{1},L^{2},L^{3},L^{4})$ 
$$\Gamma^{i} = \frac{\partial \varphi^{i}}{\partial u} du + \frac{\partial \varphi^{i}}{\partial v} dv = 
\alpha L^{i}(x)du + \beta L^{i}(y)dv.$$ 

In words, we have a unique pair $\alpha$ and $\beta$ for each coordinate $1$-form $\Gamma^{i} = d \varphi^{i}$.  

\vspace{0.2cm}
Now, we assume that the vector $1$-form $\Gamma = \alpha L(x) du + \beta L(y) dv$ is given over the ring $\F(M,\real)$ 
of smooth functions from $M$ into $\real$. Since we are assuming that $M$ is a simply connected open subset of $\complex$, 
we have: 

\begin{prop}
The vector-valued $1$-form $\Gamma= \alpha L(x) du + \beta L(y) dv$ is exact if, and only if it is closed.
Then the following equation is a necessary and 
sufficient condition for the existence of the vector field $V(w)$ such that $dV = \Gamma$ 
\begin{equation}\label{3}
d \Gamma = \left[-\left(\alpha_{v} L(x) + \alpha \frac{\partial L(x)}{\partial v}\right) + 
\left(\beta_{u} L(y) + \beta \frac{\partial L(y)}{\partial u}\right)\right] du \wedge dv = 0.
\end{equation} 

If equation (\ref{3}) holds then the vector field $V(w)$ is given by:  
\begin{equation}
V(w) = V_{0} + \int_{0}^{w} \alpha L(x) du + \beta L(y) dv.
\end{equation} 

Moreover, from $\lpr{d \Gamma(\partial u,\partial v)}{L(y)} = 0$ and $\lpr{d \Gamma(\partial u,\partial v)}{L(x)} = 0$  follow the equations 
\begin{equation}\label{5}
\frac{1}{\alpha} \; \frac{\partial \alpha}{\partial v} = \frac{-\lpr{\partial_{v}L(x)}{L(y)}}{\lpr{L(x)}{L(y)}} \; \;  \ {\rm and } \; \; \
\frac{1}{\beta} \; \frac{\partial \beta}{\partial u} =  \frac{- \lpr{\partial_{u} L(y)}{L(x)}}{\lpr{L(x)}{L(y)}}.
\end{equation} 
The equation (\ref{5}) is a necessary condition, but it is not sufficient.
\end{prop}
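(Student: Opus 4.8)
The plan is to treat the four assertions in turn, all of which rest on the Poincar\'e lemma together with the lightlike nature of $L(x)$ and $L(y)$.

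First, for the equivalence of exactness and closedness, note that $d^{2}=0$ makes any exact $1$-form closed, so only the converse needs work. I would apply the scalar Poincar\'e lemma componentwise: writing $\Gamma=(\Gamma^{1},\Gamma^{2},\Gamma^{3},\Gamma^{4})$ with $\Gamma^{i}=\alpha L^{i}(x)\,du+\beta L^{i}(y)\,dv$, each $\Gamma^{i}$ is a real $1$-form on the connected, simply connected set $M\subset\complex$, and the fact recalled just before the proposition produces a potential $\varphi^{i}$ with $d\varphi^{i}=\Gamma^{i}$ whenever $\Gamma^{i}$ is closed. Assembling $V=(\varphi^{1},\dots,\varphi^{4})$ gives $dV=\Gamma$, so $\Gamma$ is exact exactly when it is closed; this is the same as saying that $d\Gamma=0$ is necessary and sufficient for a vector field $V$ with $dV=\Gamma$ to exist.

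Second, the explicit form of $d\Gamma$ in (\ref{3}) is the standard identity $d(P\,du+Q\,dv)=(Q_{u}-P_{v})\,du\wedge dv$ with $P=\alpha L(x)$ and $Q=\beta L(y)$; the product rule gives $P_{v}=\alpha_{v}L(x)+\alpha\,\partial_{v}L(x)$ and $Q_{u}=\beta_{u}L(y)+\beta\,\partial_{u}L(y)$, which is precisely the bracketed coefficient. The representation $V(w)=V_{0}+\int_{0}^{w}\Gamma$ then follows from the fundamental theorem for line integrals applied to each $\varphi^{i}$, with path-independence guaranteed by simple connectivity and $d\Gamma=0$.

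Third, and this is the computational heart, I would derive (\ref{5}) by pairing the coefficient vector $d\Gamma(\partial_{u},\partial_{v})=-\bigl(\alpha_{v}L(x)+\alpha\,\partial_{v}L(x)\bigr)+\bigl(\beta_{u}L(y)+\beta\,\partial_{u}L(y)\bigr)$ against $L(y)$ and against $L(x)$. The inputs are that $L(x),L(y)$ are lightlike, so $\lpr{L(x)}{L(x)}=\lpr{L(y)}{L(y)}=0$ and, differentiating these identities, $\lpr{\partial_{v}L(x)}{L(x)}=\lpr{\partial_{u}L(y)}{L(y)}=0$, while $\lpr{L(x)}{L(y)}=-2\vert x-y\vert^{2}\neq 0$ by Proposition \ref{39}. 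Pairing with $L(y)$ annihilates both $\beta$-terms and leaves $-\alpha_{v}\lpr{L(x)}{L(y)}-\alpha\lpr{\partial_{v}L(x)}{L(y)}=0$; dividing by $\alpha\lpr{L(x)}{L(y)}$ yields the first equation of (\ref{5}), and pairing with $L(x)$ is symmetric and gives the second.

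Finally, for non-sufficiency I would argue by dimension. The two conditions $\lpr{d\Gamma(\partial_{u},\partial_{v})}{L(x)}=0$ and $\lpr{d\Gamma(\partial_{u},\partial_{v})}{L(y)}=0$ say precisely that $d\Gamma(\partial_{u},\partial_{v})$ is Lorentz-orthogonal to the timelike plane $T=Span\{L(x),L(y)\}$; since the pairing is nondegenerate on $T$ (its Gram determinant is $-\lpr{L(x)}{L(y)}^{2}<0$), these are equivalent only to the vanishing of the $T$-component of $d\Gamma(\partial_{u},\partial_{v})$. Vanishing of the whole $\mink$-vector additionally forces its spacelike $T^{\perp}$-component to vanish, i.e.\ two further scalar equations obtained by pairing against a spacelike basis $\{X,Y\}$ of $T^{\perp}$. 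Hence (\ref{5}) is necessary but not sufficient, and I expect pinning down exactly which extra constraints on $x$ and $y$ the $T^{\perp}$-component encodes to be the only genuinely delicate point.
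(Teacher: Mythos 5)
Your proposal is correct and takes essentially the same route as the paper: the paper's own proof likewise derives (\ref{5}) by pairing $d\Gamma(\partial_u,\partial_v)$ against $L(y)$ and $L(x)$, using that each null vector is orthogonal to itself and to its own derivatives and that $\lpr{L(x)}{L(y)}=-2\vert x-y\vert^{2}\neq 0$, with exactness-iff-closedness delegated to the simple-connectivity fact stated just before the proposition. The only differences are cosmetic: the paper's proof continues the computation to express the right-hand sides of (\ref{5}) explicitly in terms of $x,y$ (its equation (\ref{7}), used later but not part of the statement), which you omit, while your $T\oplus T^{\perp}$ decomposition argument makes the asserted non-sufficiency explicit, something the paper states without justification.
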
  

\begin{proof}
Starting with  $\lpr{d \Gamma(\partial u,\partial v)}{L(y)} = 0$ 
 we have 
\begin{eqnarray*}\frac{\alpha_{v}}{\alpha}=-\frac{\l(L(x))_v,L(y)\r}{\l L(x),L(y)\r}=\\
-\frac{\l ((x \bar x)_v, x_v+ \bar x_v, -i(x_v-\bar x_v), (x \bar x)_v),(1 + y\bar y, y + \bar y , -i(y-\bar y),-1 + y\bar y)\r}{-2(x-y)(\bar x - \bar y)}=\\
\frac{-x_{v}}{x - y} + \frac{-\overline{x}_{v}}{\overline{x} - \overline{y}}.
\end{eqnarray*}
The same proof works for $\beta$,
so that equations (\ref{5})  become
\begin{equation}\label{7}
\frac{\alpha_v}{\alpha} =  \frac{- x_v}{x-y} +  \frac{- \overline x_v}{\overline x- \overline y} \ \ \  \ \ {\rm and} \ \ \  \ \ 
\frac{\beta_u}{\beta} =  \frac{y_u}{x-y} +  \frac{\overline y_u}{\overline x-\overline y}.
\end{equation}
\end{proof}

\section{Constructing timelike parametric surface in $\sitter$}

Let us take $W(x,y)$ given by equation (\ref{defiw}), where 
$$x(w) = st(f_{u}(w)) \; \; \mbox{ and } \; \; y(w) = st(f_{v}(w))$$ 
and $(M,f)$ is an isotropic surface of $\sitter$ equipped with the non-constant Gauss map $\nu(w)$. 
Then we find a map $\mu(x,y) \in \complex$ for which $f(w)$ is given by the following equation:
\begin{equation}\label{standardequation}
f(w) = \frac{\mu \; W(x,y) + \overline{\mu} \; \overline{W(x,y)}}{2} \; \; \mbox{and } \; \; |\mu|^2\clpr{W(x,y)}{\overline{W(x,y)}} =2.
\end{equation} 
%{\color{orange} In fact,  we note that $$f\in [f_u, f_v]^\perp$$ . In addition $f_u=\alpha L(x)$ and $f_v=\beta L(y),$ so that $f\in [L(x), L(y)]^\perp$.  We also see that $W(x,y)\perp L(x), L(y)$ and $W(y,x)\perp L(x), L(y)$.
%Thus we see that $f\in [W(x,y),W(y,x)]$.  This gives $f=aW + b \bar W$ and, finally, the reality of $f$ implies that $b=\bar a$.}

\vspace{0.2cm}

Next we look for complex partial differential equations which relate the functions $\mu(w)$, $x(w)$ and $y(w)$ for $(M,f),$ where $f(w)$ is the map given by equations (\ref{standardequation}), and such that its Gauss 
map $\nu(w)$ has the following form:   
\begin{equation}\label{normal}
\nu(w) = \frac{\mu \; W(x,y) - \overline{\mu} \; \overline{W(x,y)}}{2i}, 
\end{equation}
satisfying that $(\forall w \in M) \; \; \{\nu_{u}(w), \nu_{v}(w)\} \subset T_{f(w)}S.$  We seek those partial differential equations whose solution will guarantee that $(M,f)$ is a parametric surface of $\mathbb S^3_1(1)$ whose Gauss map is  exactly the function $\nu(w)$.  
This means we are looking for the  spherical conditions for equation (\ref{standardequation}). We recall that $\clpr{W(x,y)}{W(x,y)} = 0 = \clpr{\overline{W(x,y)}}{\overline{W(x,y)}}.$

\begin{lemma}[Spherical conditions]\label{67}
Let $f(w)$ be the map given by equations (\ref{standardequation}) with $x,y, \mu \in \F(M,\complex)$ and $W(x(w),y(w))$ given by equation (\ref{defiw}).  Let $\nu(w)$ be the map given by equation (\ref{normal}). 
Then, $(M,f)$ is a parametric surface of a scaled \ $\sitter$ equipped with Gauss map $(M, \nu)$ if, and only if, the following equations
\begin{align}\label{spherical}
\frac{\mu_{w}}{\mu} = - \frac{\clpr{W_{w}}{\overline{W}}}{\clpr{W}{\overline{W}}} \; \; \  \ {\rm and } \; \; \ \ 
\frac{\mu_{\overline{w}}}{\mu} = - \frac{\clpr{W_{\overline{w}}}{\overline{W}}}{\clpr{W}{\overline{W}}}.
\end{align}
are satisfied.   
\end{lemma}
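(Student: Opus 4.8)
The plan is to reduce the geometric assertion to scalar identities and then translate them into (\ref{spherical}) through one inner-product computation, exploiting that the bilinear extension $\clpr{\cdot}{\cdot}$ kills all the $\clpr{W}{W}$- and $\clpr{\overline{W}}{\overline{W}}$-type terms. Write $P := \clpr{W}{\overline{W}} = 2|x-y|^2 > 0$. Expanding the defining formulas $f = \tfrac12(\mu W + \overline{\mu}\,\overline{W})$ and $\nu = \tfrac{1}{2i}(\mu W - \overline{\mu}\,\overline{W})$ by bilinearity and using $\clpr{W}{W} = \clpr{\overline{W}}{\overline{W}} = 0$ gives, with no differential hypothesis, $\lpr{f}{f} = \lpr{\nu}{\nu} = \tfrac12|\mu|^2 P$ and $\lpr{f}{\nu} = 0$. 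Hence $f$ parametrizes an open piece of a scaled $\sitter$ exactly when $|\mu|^2 P$ is constant, and then $\nu$ is automatically orthogonal to $f$ and of the same constant length. Since $\lpr{\nu}{\nu}$ and $\lpr{\nu}{f}$ are then constant, the normal part of $\nu_{u}$ is $-\lpr{\nu}{f_{u}}\,f/\lpr{f}{f}$, so the spherical condition $\nu_{u},\nu_{v}\in T_{f(w)}S$ is equivalent to $\lpr{\nu}{f_{u}} = \lpr{\nu}{f_{v}} = 0$; as $\nu, f_{u}, f_{v}$ are real this is the single complex identity $\clpr{\nu}{f_{w}} = 0$. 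Thus the lemma reduces to showing that, together with constancy of $|\mu|^2 P$, the condition $\clpr{\nu}{f_{w}} = 0$ is equivalent to (\ref{spherical}).

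The computational heart is the identity $\mu W = f + i\nu$, equivalently $\overline{\mu}\,\overline{W} = f - i\nu$, so that $\overline{W} = \overline{\mu}^{-1}(f - i\nu)$. Pairing this with $f_{w}$ and using $\clpr{f}{f_{w}} = \tfrac12\,\partial_{w}\lpr{f}{f}$ gives the first expression $\clpr{\overline{W}}{f_{w}} = \overline{\mu}^{-1}\big(\clpr{f}{f_{w}} - i\,\clpr{\nu}{f_{w}}\big)$. On the other hand, differentiating $2f = \mu W + \overline{\mu}\,\overline{W}$ yields $2f_{w} = \mu_{w}W + \mu W_{w} + (\overline{\mu})_{w}\overline{W} + \overline{\mu}\,(\overline{W})_{w}$; pairing with $\overline{W}$ and invoking $\clpr{\overline{W}}{W} = P$, $\clpr{\overline{W}}{\overline{W}} = 0$ and $\clpr{\overline{W}}{(\overline{W})_{w}} = \tfrac12\partial_{w}\clpr{\overline{W}}{\overline{W}} = 0$ collapses everything to the second expression $\clpr{\overline{W}}{f_{w}} = \tfrac12\big(\mu_{w}P + \mu\,\clpr{W_{w}}{\overline{W}}\big)$. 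Setting this to zero is exactly the first equation of (\ref{spherical}), and the identical computation with $w$ replaced by $\overline{w}$ produces the second.

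It remains to splice the two expressions. For the reverse implication, assume $f$ is a surface in a scaled $\sitter$ with Gauss map $\nu$: then $|\mu|^2 P$ is constant, so $\clpr{f}{f_{w}} = 0$, and the spherical condition gives $\clpr{\nu}{f_{w}} = 0$; the first expression then forces $\clpr{\overline{W}}{f_{w}} = 0$, hence the first equation of (\ref{spherical}), and the $\overline{w}$-version gives the second. For the direct implication, assume (\ref{spherical}). I would first check that these equations already force $|\mu|^2 P$ to be constant: computing $\partial_{w}\log(|\mu|^2 P) = \tfrac{\mu_{w}}{\mu} + \tfrac{(\overline{\mu})_{w}}{\overline{\mu}} + \tfrac{P_{w}}{P}$, substituting the first equation for $\mu_{w}/\mu$, the conjugate of the second equation (via $\overline{W_{\overline{w}}} = (\overline{W})_{w}$) for $(\overline{\mu})_{w}/\overline{\mu}$, and $P_{w} = \clpr{W_{w}}{\overline{W}} + \clpr{W}{(\overline{W})_{w}}$, all three terms cancel, and likewise in $\overline{w}$, so $|\mu|^2 P$ is constant and $f$ lands on a scaled $\sitter$. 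Then the second expression of the previous paragraph gives $\clpr{\overline{W}}{f_{w}} = 0$, which with $\clpr{f}{f_{w}} = 0$ and the first expression yields $\clpr{\nu}{f_{w}} = 0$; that is, $\nu$ is the unit Gauss map and the spherical condition holds.

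The only real difficulty I anticipate is disciplined bookkeeping with the symmetric (not Hermitian) form $\clpr{\cdot}{\cdot}$: one must apply the conjugation rules $\overline{f_{w}} = (\overline{f})_{\overline{w}}$ and $(\overline{W})_{w} = \overline{W_{\overline{w}}}$ correctly and deploy $\clpr{W}{W} = \clpr{\overline{W}}{\overline{W}} = 0$ at precisely the right places to annihilate the stray terms. Once the two expressions for $\clpr{\overline{W}}{f_{w}}$ are in hand the equivalence is immediate; the one conceptual point worth flagging is that (\ref{spherical}) by itself forces $|\mu|^2 P$ to be constant, which is exactly why the target is a scaled $\sitter$ and why the normalization $|\mu|^2\clpr{W}{\overline{W}} = 2$ of (\ref{standardequation}) need not be imposed separately.
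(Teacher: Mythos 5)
Your proof is correct and follows essentially the same route as the paper's: both reduce the Gauss-map/spherical condition to the vanishing of $\clpr{f_{w}}{W}$ and $\clpr{f_{w}}{\overline{W}}$ (via the identification $\mathrm{Span}_{\complex}\{W,\overline{W}\}=\mathrm{Span}_{\complex}\{f,\nu\}$, i.e.\ $\mu W = f+i\nu$), then expand $2f_{w}$ against $\overline{W}$ using $\clpr{W}{W}=\clpr{\overline{W}}{\overline{W}}=0$ to obtain (\ref{spherical}). The only difference is that you spell out two steps the paper merely asserts — the cancellation showing that (\ref{spherical}) alone forces $\vert\mu\vert^{2}\clpr{W}{\overline{W}}$ to be constant, and the reduction of the spherical condition to $\clpr{\nu}{f_{w}}=0$ — which is a welcome tightening but not a different argument.
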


\begin{proof}
From equation (\ref{standardequation}) we have $\mu \bar \mu \l W, \bar W \r ^{\mathbb C} = 2$ \  hence we have 
$$\frac{\mu_{w}}{\mu} + \frac{\clpr{W_{w}}{\overline{W}}}{\clpr{W}{\overline{W}}} + 
\frac{\overline{\mu}_{w}}{\overline{\mu}} + \frac{\clpr{\overline{W}_{w}}{W}}{\clpr{W}{\overline{W}}} = 0. $$ {{Since $\nu(w)$ is the Gauss map it follows that for all $w \in M$, \ $ \{\nu_{u}(w), \nu_{v}(w)\} \subset T_{f(w)}S.$}}  As we saw above,   $\l f_w, W\r^{\mathbb C} = 0 = \l f_w, \overline W\r^{\mathbb C}$. So, 
$$
\mu_w \l W, \overline W\r = - \mu \l W_w, \overline W\r \ \ \  {\rm and} \ \ \overline \mu_w \l W, \overline W \r = - \overline \mu \l \overline W_w, W\r.
$$
 Equations (\ref{spherical}) follow from these equations.

Now if equations (\ref{spherical}) are satisfied then $\mu \bar \mu \l W, \bar W \r^{\mathbb C} = c >0$, hence
 $\l f, f\r = constant>0$. Since $2 f_w = (\mu W)_w + (\overline \mu \overline W)_w$
then from equation (\ref{spherical}) it follows that 
$$
\l f_w, W\r^{\mathbb C}= \frac{1}{2}[ \overline \mu_w\l \overline W, W\r + \overline \mu \l \overline W_w, W\r] = 0$$
$$
\l f_w, \overline W\r^{\mathbb C} = \frac{1}{2}[ \mu_w\l \overline W, W\r + \mu \l W_w, \overline W\r] = 0.$$
Therefore for all $w \in M$, $ \{\nu_{u}(w), \nu_{v}(w)\} \subset T_{f(w)}S. $ So $(M,f)$ is a parametric surface of a scaled $\mathbb S^3_1(1)$ with Gauss map $\nu(w)$.
\end{proof}

Next we look for the conditions which imply that we can choose the parametric coordinates to be isotropic at every point of $M$.
 
\begin{lemma}[Isotropic condition]\label{68}
Let $(M,f)$ and $(M,\nu)$ be the maps given respectively by (\ref{defiw}) and (\ref{standardequation}), for which equations (\ref{spherical}) hold. \ Then the pair $(M,f)$ is a parametric isotropic surface of $\sitter$ with Gauss map $(M,\nu)$ if and only if 
the following equations 
\begin{equation}\label{2}
\begin{cases}
\Im (\mu \clpr{W_{w}}{L(y)} + \overline{\mu} \clpr{\overline{W}_{w}}{L(y)}) = 0 \\
\Re (\mu \clpr{W_{w}}{L(x)} + \overline{\mu} \clpr{\overline{W}_{w}}{L(x)}) = 0
\end{cases}
\end{equation}
are satisfied.
\end{lemma}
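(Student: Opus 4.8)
The plan is to reduce the isotropy requirement, which by definition asks that the induced metric be null, i.e. $E = \lpr{f_u}{f_u} = 0$ and $G = \lpr{f_v}{f_v} = 0$, to the two scalar equations (\ref{2}) by expressing $f_w$ in the complexified tangent frame. Since equations (\ref{spherical}) are assumed, Lemma \ref{67} already guarantees that $(M,f)$ lies in a scaled $\sitter$ with Gauss map $\nu$, and the proof of that lemma shows $\clpr{f_w}{W} = 0 = \clpr{f_w}{\overline W}$. Writing $\mu W = f + i\nu$ and $\overline\mu\,\overline W = f - i\nu$, the pair $\{W,\overline W\}$ spans the complexification of the (spacelike) normal plane $\mathrm{Span}_{\real}\{f,\nu\}$; hence $f_w$ lies in its $\clpr{}{}$-orthogonal complement, which by the orthogonality relations $\clpr{W}{L(x)} = \clpr{W}{L(y)} = 0$ is exactly $\mathrm{Span}_{\complex}\{L(x),L(y)\}$. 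So I would first write $f_w = p\,L(x) + q\,L(y)$ for complex functions $p,q$.

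Next I would simplify the brackets in (\ref{2}). Differentiating $2f = \mu W + \overline\mu\,\overline W$ and using $\clpr{W}{L(x)} = \clpr{W}{L(y)} = 0$ together with their conjugates $\clpr{\overline W}{L(x)} = \clpr{\overline W}{L(y)} = 0$, all the terms carrying $\mu_w$ and $\overline\mu_w$ drop out, leaving $\mu\clpr{W_w}{L(y)} + \overline\mu\clpr{\overline W_w}{L(y)} = 2\clpr{f_w}{L(y)}$ and $\mu\clpr{W_w}{L(x)} + \overline\mu\clpr{\overline W_w}{L(x)} = 2\clpr{f_w}{L(x)}$. Then, using $\clpr{L(x)}{L(x)} = \clpr{L(y)}{L(y)} = 0$ and $\clpr{L(x)}{L(y)} = \lpr{L(x)}{L(y)} = -2\vert x-y\vert^2$ (a nonzero real number), the decomposition of $f_w$ gives $\clpr{f_w}{L(x)} = -2\vert x-y\vert^2\, q$ and $\clpr{f_w}{L(y)} = -2\vert x-y\vert^2\, p$. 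Hence (\ref{2}) is equivalent to $\Re(q) = 0$ and $\Im(p) = 0$.

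Finally I would translate this back to the metric. As $f$ is real-valued, $f_{\overline w} = \overline{f_w} = \overline p\,L(x) + \overline q\,L(y)$, so $f_u = f_w + f_{\overline w} = 2\Re(p)\,L(x) + 2\Re(q)\,L(y)$ and $f_v = i(f_w - f_{\overline w}) = -2\Im(p)\,L(x) - 2\Im(q)\,L(y)$. Because $L(x), L(y)$ are null with $\lpr{L(x)}{L(y)} \neq 0$, a direct expansion gives $E = -16\vert x-y\vert^2\,\Re(p)\Re(q)$ and $G = -16\vert x-y\vert^2\,\Im(p)\Im(q)$. The condition $\Re(q) = 0$ forces $f_u = 2\Re(p)\,L(x)$ to be a null multiple of $L(x)$ and makes $E = 0$, while $\Im(p) = 0$ forces $f_v$ to be a multiple of $L(y)$ and makes $G = 0$; conversely these alignments force $\Re(q) = \Im(p) = 0$. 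Thus (\ref{2}) holds if and only if the coordinates are isotropic with $f_u \parallel L(x)$ and $f_v \parallel L(y)$, in agreement with $x = st(f_u)$ and $y = st(f_v)$, and in that case $F = \lpr{f_u}{f_v} = 8\vert x-y\vert^2\,\Re(p)\Im(q)$ is nonzero by the linear independence of $f_u, f_v$, so the Lorentz metric is non-degenerate.

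The conceptual crux, and the step I expect to carry the most weight, is the observation that the spherical conditions already confine $f_w$ to the tangent plane, so that only the real/imaginary alignment of the tangential coefficients $p$ and $q$ remains free; this is precisely why (\ref{2}) appears as one vanishing imaginary part and one vanishing real part rather than as the vanishing of a product. The only genuine computation is verifying the orthogonality identities $\clpr{W}{L(x)} = \clpr{W}{L(y)} = 0$ and the collapse of the bracketed expressions to $2\clpr{f_w}{L(y)}$ and $2\clpr{f_w}{L(x)}$, which is routine but must be done carefully to keep track of the non-holomorphic terms $\overline\mu_w$ and $\overline W_w$.
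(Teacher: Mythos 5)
Your proof is correct and takes essentially the same route as the paper's: both directions hinge on placing $f_w$ in $\mathrm{Span}_{\complex}\{L(x),L(y)\}$ via the spherical conditions, identifying the bracketed expressions in (\ref{2}) with $2\clpr{f_w}{L(y)}$ and $2\clpr{f_w}{L(x)}$, and reading off isotropy (with the alignment $f_u \parallel L(x)$, $f_v \parallel L(y)$) as the vanishing of the cross-coefficients $\Im(p)=\Re(q)=0$. Your write-up simply makes explicit some steps the paper leaves implicit, namely the collapse of the $\mu_w$, $\overline{\mu}_w$ terms and the formulas for $E$, $G$, $F$.
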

\begin{proof}
From hypothesis we are taking $W(x,y)$ such that $x= st(f_u(w))$ and $y =st(f_v(w))$. Hence we have that $f_u(w) = \alpha L(x)$ and
 $f_v(w) = \beta L(y)$ for $\alpha, \beta$ real-valued functions. \ Since $\l f_w, L(y)\r$ is {{real}}  valued, and 
 $2 \l f_w, L(y)\r = \mu \l W_w, L(y)\r + \overline \mu \l \overline W_w, L(y)\r $, it follows that $\Im( \mu \l W_w, L(y)\r  + \overline \mu \l\overline W_w, L(y)\r) = 0.$ In similar way since $\l f_w, L(x)\r$ is imaginary valued, the second equation of (\ref{2}) follows. 

\vspace{0.1cm}
We  now show sufficiency. The map $f(w)$ is given, and (8) says that $\nu(w)$ is its Gauss map, then, we have a 
timelike surface of $\sitter$. A pointwise isotropic basis for the tangent bundle $T_{f(w)}S$, by Proposition (\ref{39}) 
is given by $\{L(x(w)),L(y(w))\}_{w \in M}$. Then we need to show that $f_u$ and $f_v$ are isotropic. In fact, since $f_{u} = f_{w} + f_{\overline{w}} = A L(x) + B L(y)$ and $f_{v} = -i(C L(x) - D L(y))$, the first and second equation 
in (9) implies respectively that $C(w) = 0$ and  $B(w) = 0$ for all $w \in M$. 
\end{proof}

So,  the pairs $(M,f)$ and $(M, \nu)$ given above, are strongly related. In fact if $(M,f)$  is assumed, for instance,  to be a minimal non-totally geodesic isotropic surface in $\mathbb S^3_1(1)$ with Gauss map $\nu(w)$, then $(M,\nu)$ will also represent an isotropic minimal surface in $\mathbb S^3_1(1)$ which is non-totally geodesic with Gauss map $f(w)$, and conversely. In fact

\begin{theor}\label{99}
Let $(M,f)$ be a minimal parametric isotropic surface given respectively by  (\ref{standardequation}) equipped with Gauss map given by (\ref{normal}). Then, $(M, \nu)$ is also a minimal non-totally geodesic isotropic surface in  $\mathbb S^3_1(1)$ with Gauss map $f(w)$.
Moreover, the isotropic condition for $(M,\nu)$ is given by the equations  
\begin{equation}
\begin{cases}\label{40}
\Im(\mu \clpr{W_{w}}{L(y)} - \overline{\mu} \clpr{\overline{W}_{w}}{L(y)}) = 0 \\
\Re(\mu \clpr{W_{w}}{L(x)} - \overline{\mu} \clpr{\overline{W}_{w}}{L(x)}) = 0.
\end{cases}
\end{equation}  
\end{theor}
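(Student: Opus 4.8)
The plan is to read off both assertions from the structural symmetry of equations (\ref{standardequation}) and (\ref{normal}). Writing $P=\mu\,W(x,y)$, we have $f=\tfrac12(P+\overline{P})=\Re P$ and $\nu=\tfrac{1}{2i}(P-\overline{P})=\Im P$. Replacing $\mu$ by $i\mu$ sends $P$ to $iP$, hence interchanges the two formulas, carrying the pair $(f,\nu)$ to $(-\nu,f)$; moreover it preserves the normalization $|\mu|^2\clpr{W}{\overline{W}}=2$ and leaves the spherical equations (\ref{spherical}) invariant, since $(i\mu)_w/(i\mu)=\mu_w/\mu$ and $(i\mu)_{\overline{w}}/(i\mu)=\mu_{\overline{w}}/\mu$. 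Thus Lemma \ref{67} applied verbatim to the parameter $i\mu$ shows that $(M,\nu)$ lies in the same scaled $\sitter$ with Gauss map $f$. That $(M,\nu)$ shares the isotropic parameters of $(M,f)$, is minimal, and is non--totally geodesic (its Gauss map $f$ is non--constant, else $f(M)$ would be a point) is the content of Theorem \ref{nu}; in particular that theorem records $\nu_u=-\tfrac{a}{F}f_v$ and $\nu_v=-\tfrac{c}{F}f_u$, so the null tangent directions of $\nu$ are those of $f$ but interchanged: $\nu_u\parallel L(y)$ and $\nu_v\parallel L(x)$.

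It remains to put the isotropic condition for $(M,\nu)$ into the form (\ref{40}), which I would do by repeating the computation of Lemma \ref{68} with $\nu$ in place of $f$. Two facts drive this. First, by the Proposition immediately following (\ref{defiw}) the vector $W$ is $\complex$--orthogonal to both null directions, $\clpr{W}{L(x)}=\clpr{W}{L(y)}=0$, and since $L(x),L(y)$ are real the same holds for $\overline{W}$; consequently the terms carrying $\mu_w$ and $\overline{\mu}_w$ drop out and
$$2i\,\clpr{\nu_w}{L(y)}=\mu\,\clpr{W_{w}}{L(y)}-\overline{\mu}\,\clpr{\overline{W}_{w}}{L(y)},\qquad 2i\,\clpr{\nu_w}{L(x)}=\mu\,\clpr{W_{w}}{L(x)}-\overline{\mu}\,\clpr{\overline{W}_{w}}{L(x)}.$$
Second, $\nu$ is real--valued, so $\nu_{\overline{w}}=\overline{\nu_w}$ and, $L(x),L(y)$ being real, $\lpr{\nu_u}{L(y)}=2\Re\,\clpr{\nu_w}{L(y)}$ and $\lpr{\nu_v}{L(x)}=-2\,\Im\,\clpr{\nu_w}{L(x)}$.

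Finally, since $\{f,L(x),L(y),\nu\}$ is a pointwise basis with $L(x),L(y)$ null and $\lpr{L(x)}{L(y)}\neq 0$, and $\nu_u,\nu_v$ are tangent by the spherical condition, the requirement $\nu_u\parallel L(y)$ is equivalent to $\lpr{\nu_u}{L(y)}=0$ and $\nu_v\parallel L(x)$ to $\lpr{\nu_v}{L(x)}=0$. Substituting the two displayed relations and using $\Re(z/i)=\Im z$, $\Im(z/i)=-\Re z$ turns these into
$$\Im\bigl(\mu\,\clpr{W_{w}}{L(y)}-\overline{\mu}\,\clpr{\overline{W}_{w}}{L(y)}\bigr)=0,\qquad \Re\bigl(\mu\,\clpr{W_{w}}{L(x)}-\overline{\mu}\,\clpr{\overline{W}_{w}}{L(x)}\bigr)=0,$$
which are precisely equations (\ref{40}). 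I expect the main obstacle to be purely organizational: keeping straight the interchange of the roles of $L(x)$ and $L(y)$ for $\nu$ (forced by Theorem \ref{nu}) and tracking the single factor $i$ coming from (\ref{normal}), which is exactly what converts the $+$ of Lemma \ref{68} into the $-$ between the $\mu$ and $\overline{\mu}$ terms and swaps the roles of $\Re$ and $\Im$.
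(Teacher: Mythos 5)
Your proposal is correct and takes essentially the same route as the paper: minimality in the Weingarten equations (the content of Theorem \ref{nu}) forces $\nu_u \parallel L(y)$ and $\nu_v \parallel L(x)$, and the isotropic conditions (\ref{40}) then follow by writing $2i\nu_w=(\mu W)_w-(\overline{\mu}\,\overline{W})_w$, killing the $\mu_w$, $\overline{\mu}_w$ terms with $\clpr{W}{L(x)}=\clpr{W}{L(y)}=0$, and letting the single factor of $i$ from (\ref{normal}) exchange the roles of $\Re$ and $\Im$ --- exactly the paper's computation. Your $\mu\mapsto i\mu$ symmetry observation for the spherical part is a tidy packaging of a step the paper leaves implicit, but it does not amount to a different argument.
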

\begin{proof}
Since $(M,f)$ is minimal, by the Weingarten equations we have that
$\nu_u = \frac{-a}{F} f_v$ and $\nu_v = \frac{-c}{F} f_u$. Hence
$$
\nu_w = \frac{1}{2} (\frac{-a \beta}{F} L(y) + i \frac{c \alpha}{F} L(x)),
$$
where $f_u(w) = \alpha L(x), \ f_v(w) = \beta L(y)$, since by hypothesis $x$ and $y$ are chosen such that $x = st(f_u(w))$ and   
$y = st(f_v(w))$. 

%Moreover using the fact that $Span\{\nu_u, \nu_v\} \subset T_{f(w)} S$, which is equivalent to having 
%$$
%\l f_w, W \r = 0 = \l f_w, \overline W\r,
%$$
%and the expressions of $f$ and $\nu$ given by formulas  (\ref{standardequation})  and (\ref{normal}), from which we get
%$$
%\nu_w = \frac{1}{i} ((\mu W)_w - f_w) = \frac{1}{i} (-(\overline \mu \overline W)_w + f_w),
%$$
We see easily that $Span\{f_u, f_v\} \subset T_{\nu(w)} S$.\hskip .1in% {\color{red} Why doesn't this follow from $\nu_u = \frac{-a}{F} f_v$ and $\nu_v = \frac{-c}{F} f_u$?}  In fact  
%$$
%\l \nu_w, W \r = \frac{1}{i} \l (\mu W)_w, W \r =0 =  \frac{1}{i} \l -(\overline \mu \overline W)_w, \overline W\r = \l \nu_w, \overline W\r.
%$$
  So $(M,\nu)$ is a isotropic surface in $\mathbb S^3_1(1)$ with Gauss map given by $(M,f)$, which is also minimal non-totally geodesic. 
Moreover, the isotropic condition for $(M,\nu)$ are obtained as follows.  We have  $$\nu_w = \frac{1}{i} ((\mu W)_w - f_w) = \frac{1}{i} (-(\overline \mu \overline W)_w + f_w).  $$Since  $\lpr{\nu_{w}}{L(x)}$ is real valued then
$\l (\mu W)_w - (\overline \mu \overline W)_w, L(x)\r$ is pure imaginary, then this corresponds to $\Re(\mu \l W_w, L(x)\r - \overline \mu \l \overline W_w, L(x) \r)=0$. So the second equation of (\ref{40}) is obtained. Similarly, the first equation is gotten using the fact that
$\l \nu_w, L(y) \r$ is now pure imaginary. 
\end{proof}

\subsection{A complex basis} Let us take the set of complex vectors 
$$c_{1} = (1,0,0,-1), \; \; \; c_{2} = (0,1,-i,0), \; \; \; c_{3} = (0,1,i,0), \; \; \; c_{4} = (1,0,0,1).$$
Each vector of this set, is null for the bilinear form $\clpr{}{}$, and the matrix of $\clpr{c_{i}}{c_{j}} = C_{ij}$ 
is given by 
$$C_{ij} = \left[\begin{matrix} 0 & 0 & 0 & -2 \\ 0 & 0 & 2 & 0 \\ 0 & 2 & 0 & 0 \\ -2 & 0 & 0 & 0 \end{matrix} \right].$$
 
\vspace{0.2cm}
In this special basis we have 
\begin{equation}\label{70}
L(x) = c_{1} + x c_{2} + \overline{x} c_{3} + x\overline{x} c_{4}, \ \ \ \ \ \ W(x,y) = c_{1} + x c_{2} + \overline{y} c_{3} + x\overline{y} c_{4},
\end{equation}
and we easily see that 
$W(y,x) = \overline{W(x,y)}$. This basis makes many of our computations easier. For example, 
 if $x = x(w)$ and $y = y(w)$, then 
$$W_ w = x_{w}(c_{2} + \overline{y} c_{4}) + \overline{y}_{w}(c_{3} + x c_{4}) \  \ \ {\rm and} \ \  \ \l W_w, L(x) \r ^{\mathbb C}  = 2(\overline{x} - \overline{y})x_{w}.$$

We observe that using the above basis, the spherical conditions (\ref{spherical}) given by Lemma (\ref{67}) are equivalent to
\begin{equation}\label{92}
\vert \mu \vert = \frac{1}{\vert x - y \vert} \; \; \mbox{ and } \; \; 
\frac{\mu_{w}}{\mu} = \frac{-x_{w}}{x - y} + \frac{\overline{y}_{w}}{\overline{x} - \overline{y}} \; \; \mbox{ and } \; \; 
\frac{\mu_{\overline{w}}}{\mu} = \frac{-x_{\overline{w}}}{x - y} + 
\frac{\overline{y}_{\overline{w}}}{\overline{x} - \overline{y}}.
\end{equation} 
 Furthermore, the isotropic condition (\ref{2}) given by Lemma (\ref{68}), with the orientation given by $\{L(x(w)),L(y(w))\}$ are equivalent to
\begin{equation}\label{89}
\frac{\overline{\mu} y_{v}}{x - y} + \frac{\mu \; \overline{y}_{v}}{\overline{x} - \overline{y}} = 0\ \ \mbox{ and } 
\ \ \frac{\mu x_{u}}{x - y} + \frac{\overline{\mu} \; \overline{x}_{u}}{\overline{x} - \overline{y}} = 0 \; \; 
\end{equation}

\subsection{Formulas for mean curvature of timelike parametric surfaces in $\sitter$}

\vspace{0.2cm}
Recall that we are assuming that $(M,f)$ is an isotropic surface in $\sitter \subset \mathbb R^4_1$. Thus, there exists two smooth functions $\alpha, \beta: M \to \mathbb R$ and two smooth functions $x,y : M \to \mathbb C$ such that
$$
f_u(w) = \alpha(w) L(x(w)) \ \ \ \  {\rm and} \ \ \ \   f_v(w) = \beta(w) L(y(w)),
$$
and the metric is such that $F= \l f_u, f_v\r = -2 \alpha \beta |x-y|^2.$ \ Moreover there exists also a smooth complex function $\mu: M \to \mathbb C$ such that
$f(w)$, $W(x,y)$ are given by formulas (\ref{standardequation}), (\ref{defiw}),  and the intrinsic Gauss map is the function $\nu(w)$ given by formula (\ref{normal}).
We have also the fixed   reference frame $\mathcal B = \{f(w), L(x(w)), L(y(w)), \nu(w))\}$.

\vspace{0.1cm}

The mean curvature of this surface is the trace of $A_\nu=\l H_f, \nu \r=\l \frac{f_{uv}}{F}, \nu \r,$ where $F= \l f_u, f_v\r$ and $H_f$ is the mean curvature vector. \ We will write this in the form $\frac{\Phi(w)}{F}$, where  $\Phi = \lpr{f_{uv}}{\nu}$.

\vspace{0.2cm}
Next we will study  $\Phi$. In fact, since $\Phi = \lpr{f_{uv}}{\nu} = \lpr{(\alpha L(x))_{v}}{\nu} = -\alpha \lpr{L(x)}{\nu_{v}}$ we have that 
$$\Phi = -\alpha \lpr{L(x)}{\nu_{v}} = 
-\alpha \lpr{L(x)}{(\mu W_{v} - \overline{\mu} {\overline{W}_{v}})/2i}.$$
Using formula (\ref{70}) we get $ \lpr{L(x)}{W_{v}} = 2(\overline{x} - \overline{y}) x_{v}$ and  $\lpr{L(x)}{\overline{W}_{v}} = 2(x - y) \overline{x}_{v}.$
Thus, 
\begin{equation}
\Phi = -2\frac{\alpha}{2i}(\mu(\overline{x} - \overline{y})x_{v} - \overline{\mu}(x - y) \overline{x}_{v}) = 
- 2 \alpha \Im(\mu(\overline{x} - \overline{y})x_{v}). 
\end{equation} 

Again since  $\Phi = \lpr{f_{uv}}{\nu} = \lpr{(\beta L(y))_{u}}{\nu}$ we have that 
$$\Phi = -\beta \lpr{L(y)}{\nu_{u}} = 
-\beta \lpr{L(y)}{(\mu W_{u} - \overline{\mu} {\overline{W}_{u}})/2i} = 2 \beta \Im(\mu(x - y)\overline{y}_{u}).$$ 
 Altogether then we have:
\begin{equation}\label{min}
\Phi = 2 \beta \Im(\mu(x - y)\overline{y}_{u}) \; \; \ {\rm and } \; \; \  \Phi = -2 \alpha \Im(\mu(\overline{x} - \overline{y})x_{v}). 
\end{equation} 

Hence we have the next result. 

\begin{lemma} 
Let $(M,f)$ be an isotropic parametric surface of the de Sitter space $\sitter$. With the notation above we have 
$$\alpha \Im(\mu(\overline{x} - \overline{y})x_{v}) + \beta \Im(\mu(x - y)\overline{y}_{u}) = 0.$$
\end{lemma}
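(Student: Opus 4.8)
The plan is to recognize that the asserted identity is precisely the consistency condition coming from the two independent expressions for $\Phi = \lpr{f_{uv}}{\nu}$ recorded in equation (\ref{min}). The vector field $f_{uv}$ is a single well-defined object on $M$, and $\nu$ is a fixed unit normal, so the scalar $\Phi$ is unambiguously determined. The only freedom in computing $\Phi$ is whether one regards the mixed derivative as $\partial_{v}(f_{u})$ or as $\partial_{u}(f_{v})$, and the equality of the two resulting formulas is exactly the content of the lemma.

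First I would recall the two derivations already performed in the excerpt. Writing $f_{uv} = \partial_{v}(\alpha L(x))$ and pairing with $\nu$, the term $\alpha_{v} L(x)$ drops out since $L(x)$ is tangent to $S$, leaving $\Phi = -\alpha \lpr{L(x)}{\nu_{v}}$; after substituting $\nu_{v} = (\mu W_{v} - \overline{\mu}\,\overline{W}_{v})/2i$ (the contributions of $\mu_{v}W$ and $\overline{\mu}_{v}\overline{W}$ vanish because $\lpr{L(x)}{W} = \lpr{L(x)}{\overline{W}} = 0$) and using $\lpr{L(x)}{W_{v}} = 2(\overline{x}-\overline{y})x_{v}$, one obtains $\Phi = -2\alpha\,\Im(\mu(\overline{x}-\overline{y})x_{v})$. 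Symmetrically, writing $f_{uv} = \partial_{u}(\beta L(y))$ gives $\Phi = 2\beta\,\Im(\mu(x-y)\overline{y}_{u})$. These are exactly the two formulas in (\ref{min}).

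Then I would simply equate the two values of $\Phi$, namely
$$-2\alpha\,\Im(\mu(\overline{x}-\overline{y})x_{v}) = 2\beta\,\Im(\mu(x-y)\overline{y}_{u}),$$
and transpose the right-hand side and divide by $2$ to obtain
$$\alpha\,\Im(\mu(\overline{x}-\overline{y})x_{v}) + \beta\,\Im(\mu(x-y)\overline{y}_{u}) = 0,$$
which is precisely the claimed identity.

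I do not expect a genuine obstacle in this final step: the entire mathematical content lies in the two prior computations of $\Phi$, both of which are already carried out above. The lemma is therefore an immediate corollary, expressing the single-valuedness of the normal component of the mixed partial $f_{uv}$ independently of the order of differentiation.
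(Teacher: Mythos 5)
Your proposal is correct and matches the paper's own route exactly: the paper derives the two expressions $\Phi = -2\alpha\,\Im(\mu(\overline{x}-\overline{y})x_{v})$ and $\Phi = 2\beta\,\Im(\mu(x-y)\overline{y}_{u})$ by computing $\lpr{f_{uv}}{\nu}$ as $\lpr{(\alpha L(x))_{v}}{\nu}$ and as $\lpr{(\beta L(y))_{u}}{\nu}$ respectively, and the lemma is stated as the immediate consequence of equating them. Your justification of the intermediate steps (tangency killing $\alpha_{v}L(x)$, orthogonality $\lpr{L(x)}{W}=\lpr{L(x)}{\overline{W}}=0$ killing the $\mu_{v}W$ and $\overline{\mu}_{v}\overline{W}$ terms) is the same as what the paper uses.
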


Now we continue looking by formulas for $F$, and for the functions  $\alpha$ and $\beta$.

\begin{lemma} 
Let $(M,f)$ be an isotropic parametric surface of the de Sitter space $\sitter$. Assume that $\Phi(w)/F$ 
is the intrinsic mean curvature of $S = f(M)$. Then
\begin{equation}\label{g03}
F = -2 \alpha \beta \vert x - y\vert^{2} = 2\alpha \Re(\mu(\overline{x} - \overline{y})x_{v}) = - 2\beta \Re(\mu(x - y)\overline{y}_{u}),
\end{equation}
and therefore: 
\begin{equation}\label{alpha,beta}
\alpha = \Re\left(\mu \frac{\overline{y}_{u}}{\overline{x} - \overline{y}} \right) \ \ \; \; {\rm and } \; \;  \ \ 
\beta = - \Re\left(\mu \frac{x_{v}}{x - y} \right).
\end{equation} 
In particular if \ $\Phi = 0$ \ then the real valued functions $\alpha$ and $\beta$ become to 
\begin{equation}\label{alpha,beta minimal}
\alpha = \mu \frac{\overline{y}_{u}}{\overline{x} - \overline{y}} \; \ \  \; {\rm and } \; \;  \ \ 
\beta = - \mu \frac{x_{v}}{x - y}.
\end{equation}
\end{lemma}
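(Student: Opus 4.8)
The plan is to compute $F=\l f_{u},f_{v}\r$ in two complementary ways and then read off $\alpha$ and $\beta$ by matching with $F=-2\alpha\beta|x-y|^{2}$ (the latter being immediate from $f_{u}=\alpha L(x)$, $f_{v}=\beta L(y)$ and $\l L(x),L(y)\r=-2|x-y|^{2}$ of Proposition \ref{39}). The engine of the whole computation is a pair of orthogonality facts: from the Proposition characterizing $W(x,y)$ together with $\overline{W}=W(y,x)$ in (\ref{70}), both $W$ and $\overline{W}$ are $\complex$-orthogonal to $L(x)$ and to $L(y)$, that is $\l W,L(x)\r=\l\overline{W},L(x)\r=\l W,L(y)\r=\l\overline{W},L(y)\r=0$. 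Geometrically this is just that the spacelike plane $\mathrm{span}\{X,Y\}$ is orthogonal to the timelike plane $\mathrm{span}\{L(x),L(y)\}$. The point is that when I differentiate $f=(\mu W+\overline{\mu}\,\overline{W})/2$ and pair with $L(x)$ or $L(y)$, every term carrying a derivative of $\mu$ is annihilated.

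Next I would run the two computations. Writing $F=\alpha\l L(x),f_{v}\r$ and expanding $f_{v}$, the $\mu_{v}$ and $\overline{\mu}_{v}$ terms drop by orthogonality, leaving $\l L(x),f_{v}\r=\tfrac{1}{2}(\mu\l L(x),W_{v}\r+\overline{\mu}\l L(x),\overline{W}_{v}\r)$. The two boundary inner products I would evaluate in the basis $\{c_{i}\}$ exactly as in (\ref{70}), getting $\l L(x),W_{v}\r=2(\overline{x}-\overline{y})x_{v}$ and $\l L(x),\overline{W}_{v}\r=2(x-y)\overline{x}_{v}$. Since these two summands are complex conjugates, $\l L(x),f_{v}\r=2\Re(\mu(\overline{x}-\overline{y})x_{v})$, whence $F=2\alpha\Re(\mu(\overline{x}-\overline{y})x_{v})$. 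The essential contrast with the $\Phi$ computation just before the lemma is the plus sign in $f$ versus the factor $1/2i$ in $\nu$, which is precisely what turns the $\Im$ appearing in $\Phi$ into the $\Re$ appearing in $F$. Symmetrically, $F=\beta\l f_{u},L(y)\r$ together with $\l W_{u},L(y)\r=-2(x-y)\overline{y}_{u}$ and $\l\overline{W}_{u},L(y)\r=-2(\overline{x}-\overline{y})y_{u}$ gives $F=-2\beta\Re(\mu(x-y)\overline{y}_{u})$. This is the displayed chain (\ref{g03}).

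To extract (\ref{alpha,beta}) I would use that $f_{u},f_{v}$ are nonzero (condition (3) for a surface), so $\alpha,\beta\neq0$. Equating $-2\alpha\beta|x-y|^{2}=-2\beta\Re(\mu(x-y)\overline{y}_{u})$ and cancelling $-2\beta$ gives $\alpha|x-y|^{2}=\Re(\mu(x-y)\overline{y}_{u})$; since $|x-y|^{2}=(x-y)(\overline{x}-\overline{y})$ is real it may be moved inside $\Re$, producing $\alpha=\Re(\mu\overline{y}_{u}/(\overline{x}-\overline{y}))$. Cancelling $2\alpha$ in the other equality gives $\beta=-\Re(\mu x_{v}/(x-y))$ in the same manner. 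Finally, for the minimal case I would invoke the formulas $\Phi=-2\alpha\Im(\mu(\overline{x}-\overline{y})x_{v})=2\beta\Im(\mu(x-y)\overline{y}_{u})$ established just above the lemma: when $\Phi=0$ both $\mu(\overline{x}-\overline{y})x_{v}$ and $\mu(x-y)\overline{y}_{u}$ are real, hence so are $\mu x_{v}/(x-y)$ and $\mu\overline{y}_{u}/(\overline{x}-\overline{y})$, the real-part operators become redundant, and (\ref{alpha,beta minimal}) follows.

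The conceptual content is short; the only genuine labor---and the step most prone to sign errors---is the bookkeeping of the four boundary inner products $\l W_{v},L(x)\r$, $\l\overline{W}_{v},L(x)\r$, $\l W_{u},L(y)\r$, $\l\overline{W}_{u},L(y)\r$ in the null basis $\{c_{i}\}$. Everything else is the orthogonality collapse of the $\mu$-derivative terms and the $\Re$-versus-$\Im$ dichotomy inherited from the forms of $f$ and $\nu$.
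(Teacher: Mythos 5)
Your proof is correct and takes essentially the same route as the paper: both arguments rest on the representation formula, the orthogonality of $W$ and $\overline{W}$ to $L(x)$ and $L(y)$, the same inner-product evaluations $\lpr{L(x)}{W_{v}} = 2(\overline{x}-\overline{y})x_{v}$ (and its companions) in the null basis $\{c_{i}\}$, the matching against $F = -2\alpha\beta|x-y|^{2}$ to extract $\alpha$ and $\beta$, and equation (\ref{min}) to pass to the minimal case. The only cosmetic difference is that the paper computes $-F = \lpr{f_{uv}}{f}$ with $f_{uv} = (\alpha L(x))_{v}$, whereas you compute $F = \lpr{f_{u}}{f_{v}} = \alpha\lpr{L(x)}{f_{v}}$ directly; the two are related by a single application of the product rule to $\lpr{f_{u}}{f} = 0$.
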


\begin{proof}
Since $\lpr{f_{uv}}{f} = -\lpr{f_{u}}{f_{v}} = 2 \alpha \beta \vert x - y \vert^{2}$, we obtain 
$$-F = \frac{1}{2} \left(\lpr{f_{uv}}{\mu W(x,y)} + \lpr{f_{uv}}{\overline{\mu} W(y,x)}\right) = 
\alpha \left(\frac{\mu}{2} \lpr{L_{v}(x)}{W(x,y)} + \frac{\overline{\mu}}{2} \lpr{L_{v}(x)}{\overline{W}}\right)$$ 
then, equation (\ref{g03}) follows from $F = 2\alpha \Re(\mu x_{v}(\overline{x} - \overline{y}))$.   In the same way, $F = -2 \beta \Re({\mu} (x-y)\overline{y}_{u})$.  The equation (\ref{alpha,beta}) follows by substitution, and  (\ref{alpha,beta minimal})  then from (\ref{min}).
\end{proof}

\section{When $\Phi = 0$ and a new class of functions}

In this section we continue under the same conditions as in Section 5 and  focus on the case when $\Phi =0$. In fact we start with next result.
\begin{theor} 
If $(M,f)$ is an isotropic parametric surface of $\sitter$ with mean curvature vector $H_{f}$, then 
\begin{equation}\label{mc}
\lpr{H_{f}}{\nu} = \frac{1}{\beta} \Im\left(\mu \frac{x_{v}}{x - y} \right) = 
\frac{1}{\alpha} \Im\left(\overline{\mu} \frac{y_{u}}{x - y}\right).
\end{equation} 
Moreover, if $\lpr{H_{f}}{\nu} = 0$, so that $\Phi =0$, then 
\begin{equation}\label{xuvyuv}
x_{uv} = \frac{2 x_{u} x_{v}}{x - y} \; \; \ \ {\rm and } \; \ \ \; y_{uv} = \frac{-2 y_{u} y_{v}}{x - y}. 
\end{equation}
\end{theor}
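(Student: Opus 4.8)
The plan is to treat the two assertions in turn, since the identity (\ref{mc}) is an immediate consequence of formulas already derived, while the PDE (\ref{xuvyuv}) is where the real work lies. For (\ref{mc}), note that $\lpr{H_{f}}{\nu} = \Phi/F$. First I would divide the expression $\Phi = -2\alpha\,\Im(\mu(\overline{x}-\overline{y})x_{v})$ from (\ref{min}) by $F = -2\alpha\beta\vert x-y\vert^{2}$ from (\ref{g03}); the factor $-2\alpha$ cancels, and since $\vert x-y\vert^{2}=(x-y)(\overline{x}-\overline{y})$ is real it may be pulled inside $\Im$, leaving $\frac{1}{\beta}\Im(\mu\, x_{v}/(x-y))$. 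Dividing instead the second expression $\Phi = 2\beta\,\Im(\mu(x-y)\overline{y}_{u})$ by $F$ and using $\Im(\bar z)=-\Im(z)$ produces the companion form $\frac{1}{\alpha}\Im(\overline{\mu}\, y_{u}/(x-y))$. This step is pure bookkeeping with $\Im$.

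For the second assertion, the vanishing $\lpr{H_{f}}{\nu}=0$ forces $\Im(\mu\, x_{v}/(x-y))=\Im(\overline{\mu}\, y_{u}/(x-y))=0$, so the real parts in (\ref{alpha,beta}) become plain equalities and the minimal forms (\ref{alpha,beta minimal}) hold: $\alpha=\mu\,\overline{y}_{u}/(\overline{x}-\overline{y})$ and $\beta=-\mu\, x_{v}/(x-y)$. Because $\alpha,\beta$ are real, conjugating gives the companion relations $\alpha=\overline{\mu}\, y_{u}/(x-y)$ and $\beta=-\overline{\mu}\,\overline{x}_{v}/(\overline{x}-\overline{y})$, equivalently $x_{v}=-\beta(x-y)/\mu$ and $y_{u}=\alpha(x-y)/\overline{\mu}$. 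The target is equivalent to $x_{uv}/x_{v}=2x_{u}/(x-y)$, so I would compute $x_{uv}/x_{v}=\partial_{u}\log x_{v}$ from $x_{v}=-\beta(x-y)/\mu$, obtaining $\beta_{u}/\beta+(x_{u}-y_{u})/(x-y)-\mu_{u}/\mu$. Substituting the spherical condition (\ref{92}) in the form $\mu_{u}/\mu=-x_{u}/(x-y)+\overline{y}_{u}/(\overline{x}-\overline{y})$ then reduces the desired identity to the single relation $\beta_{u}/\beta = y_{u}/(x-y)+\overline{y}_{u}/(\overline{x}-\overline{y})$.

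It remains to verify this last relation, which is the technical heart. Its right-hand side equals $\alpha/\overline{\mu}+\alpha/\mu = 2\alpha\,\Re\mu/\vert\mu\vert^{2}$ by the two conjugate forms of $\alpha$ above. For the left-hand side I would read off $\beta_{u}$ as the $c_{1}$-coefficient of the minimality relation $f_{uv}=-Ff$: writing $f_{v}=\beta L(y)$ and $f=(\mu W+\overline{\mu}\,\overline{W})/2$ in the basis (\ref{70}), the $c_{1}$-coefficient of $f_{uv}$ is $\beta_{u}$, while that of $-Ff$ is $-F\Re\mu$, whence $\beta_{u}=-F\Re\mu$. Combining with the spherical normalization $\vert\mu\vert^{2}=1/\vert x-y\vert^{2}$ from (\ref{92}) and the metric formula $F=-2\alpha\beta\vert x-y\vert^{2}$ from (\ref{g03}) gives $\beta_{u}/\beta = 2\alpha\vert x-y\vert^{2}\Re\mu = 2\alpha\,\Re\mu/\vert\mu\vert^{2}$, matching the right-hand side. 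The equation for $y$ follows by the symmetric computation: $y_{uv}/y_{u}=\partial_{v}\log y_{u}$ from $y_{u}=\alpha(x-y)/\overline{\mu}$, the spherical condition for $\overline{\mu}_{v}/\overline{\mu}$, and the relation $\alpha_{v}=-F\Re\mu$ (the $c_{1}$-coefficient of $f_{vu}=-Ff$), which reduces matters to $\alpha_{v}/\alpha = -x_{v}/(x-y)-\overline{x}_{v}/(\overline{x}-\overline{y})$, valid by the same three ingredients.

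The main obstacle is the careful derivation of the auxiliary first-order relations $\beta_{u}=-F\Re\mu$ and $\alpha_{v}=-F\Re\mu$ from the minimality equation expressed in the complex basis (\ref{70}), together with the consistent use of the reality of $\alpha$ and $\beta$ to pass between $\mu$ and $\overline{\mu}$; once these are in hand the logarithmic-derivative computation collapses cleanly, since the spherical and metric conditions exactly cancel the $F_{u}$- and $\mu_{u}$-contributions. The conceptual crux is recognizing that differentiating the mixed relation for $x_{v}$ and invoking the spherical condition returns precisely the factor $2x_{u}/(x-y)$, and dually for $y$.
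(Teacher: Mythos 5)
Your proof is correct, and its core computation coincides with the paper's: like the paper, you reduce (\ref{mc}) to dividing the two expressions (\ref{min}) for $\Phi$ by (\ref{g03}) for $F$, and for (\ref{xuvyuv}) you take the logarithmic derivative of the minimal-case relation $\beta = -\mu\,x_{v}/(x-y)$ and substitute the spherical condition $\mu_{u}/\mu = -x_{u}/(x-y)+\overline{y}_{u}/(\overline{x}-\overline{y})$, reducing everything to the single identity $\beta_{u}/\beta = y_{u}/(x-y)+\overline{y}_{u}/(\overline{x}-\overline{y})$ (and its $\alpha_{v}$ analogue). Where you genuinely diverge is in justifying that identity. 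The paper simply cites its equation (\ref{7}), which was established earlier for \emph{arbitrary} isotropic surfaces as an integrability condition: it is the closedness of $df=\alpha L(x)\,du+\beta L(y)\,dv$ (symmetry of mixed partials) paired against $L(y)$ and $L(x)$, so no minimality is needed there. You instead re-derive it under minimality, reading off the $c_{1}$-coefficient of the Gauss equation $f_{uv}=-Ff$ in the basis (\ref{70}) to obtain $\beta_{u}=\alpha_{v}=-F\,\Re\mu$, and then closing the loop with $F=-2\alpha\beta\vert x-y\vert^{2}$, $\vert\mu\vert=1/\vert x-y\vert$, and the conjugate forms of (\ref{alpha,beta minimal}); I checked these steps and they are sound. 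The trade-off: the paper's citation is shorter and rests on a fact valid without any curvature hypothesis, while your derivation is self-contained within the minimal setting and produces the clean intermediate formulas $\beta_{u}=\alpha_{v}=-F\,\Re\mu$, which the paper never states explicitly. The only cost is redundancy — the relation you labor to prove is already available, for free, from $f_{uv}=f_{vu}$ via the paper's integration Proposition.
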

\begin{proof}
First note that (\ref{mc}) follows from (\ref{min}) and (\ref{g03}), using the fact that $\Im \gamma =-\Im (\overline \gamma).$ Next we show the equations in (\ref{xuvyuv}). Taking the logarithmic derivative of equation (\ref{alpha,beta minimal}) for the real valued function $\beta$ we obtain
$$\frac{\beta_{u}}{\beta} = \frac{\mu_{u}}{\mu} + \frac{x_{uv}}{x_{v}} - \frac{x_{u} - y_{u}}{x - y}.$$ 
From second part of equation (\ref{7}) for $\beta$ and from the version of equation (\ref{spherical}) for the variable $u$, namely 
$$\frac{\mu_{u}}{\mu} = \frac{-x_{u}}{x - y} + \frac{\overline{y}_{u}}{\overline{x} - \overline{y}},$$ 
we finally get  the first equation of (\ref{xuvyuv}). The second equation follows in a similar way.
\end{proof}

\begin{theor}\label{91}
Let $(M,f)$ be an isotropic parametric surface in $\sitter$ such that $\Phi = 0$ and 
$$f(w) = \frac{\mu(w) W(x(w),y(w)) + \overline{\mu(w)} W(y(w),x(w))}{2},$$ with $f_u=\alpha L(x)$ and $f_v=\beta L(y)$.
Then the functions $x,  y: M \to  \complex$ belong a class of complex function $Z(w) = \varphi(w) + i \psi(w)$ 
such that 
\begin{equation}
\frac{\partial Z}{\partial v} = i \sigma(w) \frac{\partial Z}{\partial u}, \; \; \mbox{ where } \; \; \sigma : M \longrightarrow \real \; \; 
\mbox{with  } \; \; \sigma(w) \neq 0 \; \; (\forall w \in M). 
\end{equation} 
Moreover it follows the following equations type Cauchy-Riemann  
\begin{equation}\label{90}
\left\{ \begin{matrix}
\varphi_{u} = \frac{1}{\sigma} \; \psi_{v}\\
\varphi_{v} = -\sigma \psi_{u}. 
\end{matrix} \right.
\end{equation}
\end{theor}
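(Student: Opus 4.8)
The plan is to show that, once $\Phi=0$, each of the functions $x$ and $y$ satisfies an equation of the form $Z_v=i\sigma Z_u$ with $\sigma$ real, by playing the isotropic conditions off against the minimal expressions for $\alpha$ and $\beta$; the Cauchy--Riemann-type system \eqref{90} then drops out simply by separating real and imaginary parts.

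First I would rewrite the isotropic conditions \eqref{89} in a form that exposes the phase. In each equation of \eqref{89} the two summands are complex conjugates of one another, so \eqref{89} is exactly the assertion that
$$\frac{\mu x_u}{x-y}\quad\text{and}\quad\frac{\overline{\mu}\,y_v}{x-y}$$
are both purely imaginary. Next I would bring in the minimal expressions \eqref{alpha,beta minimal}: since $\alpha,\beta$ are real valued, the formula $\beta=-\mu\frac{x_v}{x-y}$ forces $\frac{\mu x_v}{x-y}$ to be real, and taking the conjugate of $\alpha=\mu\frac{\overline{y}_u}{\overline{x}-\overline{y}}$ shows that $\frac{\overline{\mu}\,y_u}{x-y}=\alpha$ is real as well.

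Now comes the key step, a division that cancels the common factor. For $x$, the quantities $\frac{\mu x_u}{x-y}$ (purely imaginary) and $\frac{\mu x_v}{x-y}$ (real) share the nonzero factor $\frac{\mu}{x-y}$, so their ratio $\frac{x_v}{x_u}$ is (imaginary)$/$(real), hence purely imaginary; this gives $x_v=i\sigma x_u$ with $\sigma$ real. The identical computation with the common factor $\frac{\overline{\mu}}{x-y}$ applied to $\frac{\overline{\mu}\,y_v}{x-y}$ and $\frac{\overline{\mu}\,y_u}{x-y}$ yields $y_v=i\sigma y_u$ with $\sigma$ real for $y$ as well. Finally, writing any such $Z=\varphi+i\psi$ and expanding $Z_v=i\sigma Z_u=-\sigma\psi_u+i\sigma\varphi_u$, comparison of real and imaginary parts gives $\varphi_v=-\sigma\psi_u$ and $\psi_v=\sigma\varphi_u$, which is precisely \eqref{90}.

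The delicate point — and where I expect the real work to lie — is the reality and, above all, the non-vanishing of $\sigma$. Non-degeneracy of the induced metric, $F=-2\alpha\beta|x-y|^2\neq0$, immediately gives $\alpha,\beta\neq0$, so that $\beta\neq0\Rightarrow x_v\neq0$ and $\alpha\neq0\Rightarrow y_u\neq0$; these keep the divisions above legitimate and make $\sigma$ finite. What must still be excluded is $x_u=0$ and $y_v=0$, which is exactly the claim $\sigma\neq0$. I would obtain this from the non-totally-geodesic hypothesis: since $a=\langle f_{uu},\nu\rangle=\alpha\langle\partial_u L(x),\nu\rangle$ with $\partial_u L(x)=x_u(c_2+\overline{x}c_4)+\overline{x}_u(c_3+xc_4)$, the condition $a\neq0$ forces $x_u\neq0$, and symmetrically $c\neq0$ forces $y_v\neq0$. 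Thus on the region where the minimal surface is genuinely non-totally geodesic (that is, $a,c\neq0$, the shape operator being nonzero) the factor $\sigma$ is real and nowhere vanishing, completing the argument; the remainder is routine conjugation and bookkeeping.
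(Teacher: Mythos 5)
Your proof is correct and, at its core, runs along the same lines as the paper's: both arguments combine the isotropic condition (\ref{89}) (which says $\mu x_u/(x-y)$ is purely imaginary) with the consequence of $\Phi=0$ (which, via (\ref{min}), or equivalently via the reality of $\beta$ in (\ref{alpha,beta minimal}), says $\mu x_v/(x-y)$ is real), and symmetrically for $y$ using $\alpha$. The only difference in execution is cosmetic: the paper eliminates the common factor $\mu/(x-y)$ to obtain the real identity $x_u\overline{x}_v+\overline{x}_u x_v=0$, writes $x=a+ib$, and invokes linear dependence of $(b_v,-a_v)$ and $(a_u,b_u)$ in ${\mathbb R}^2$, whereas you cancel that factor by division; these are the same computation. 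Where you genuinely go beyond the paper is your last paragraph: the published proof stops at ``there exists a real valued $\sigma$ with $x_v=i\sigma x_u$'' and never verifies the theorem's assertion that $\sigma(w)\neq 0$, nor even the finiteness of $\sigma$ (i.e.\ $x_u\neq 0$), which both the division and the linear-dependence step tacitly require. Your observations that $F\neq 0$ forces $\alpha,\beta\neq0$, hence $x_v\neq 0$ and $y_u\neq 0$, and that $a\neq 0$, $c\neq 0$ force $x_u\neq 0$ and $y_v\neq 0$, are correct and supply exactly the missing step; note only that this makes visible an implicit hypothesis, since the paper's standing assumption (non-constant Gauss map) is weaker than $a,c\neq 0$ on all of $M$, so the conclusion $\sigma\neq 0$ everywhere really holds, as you say, on the region where the shape operator is nowhere zero --- indeed the paper's own totally geodesic example with $x=v$, $y=u$ has $x_u\equiv 0$, so no finite $\sigma$ exists there and some such hypothesis is unavoidable.
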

\begin{proof}
Assuming that $\Phi = 0$, we get, from second equation of (\ref{min}) that
$$\frac{\mu x_{v}}{x - y} = \frac{\overline{\mu} \; \overline{x}_{v}}{\overline{x} - \overline{y}}.$$ 
Then taking this last equation together with the second equation of (\ref{89}), it follows that $x_{u} \; \overline{x}_{v} + \overline{x}_{u} \; x_{v} = 0.$  Then writing $x = a + i b$, from this last equation, we obtain $a_{u} \; a_{v} + b_{u} \; b_{v} = 0$ 
which means that the set of $\real^{2}$-vectors $\{(b_{v},-a_{v}),(a_{u},b_{u})\}$ is a linearly dependent set. 
This last equation says that, pointwise, there exists a real valued function $\sigma = \sigma(u,v)$ such that 
$$x_{v}(u,v) = i \sigma(u,v) \; x_{u}(u,v) \; \; \mbox{  for } \; \; (u,v) \in M.$$ 
An analogous computation shows that the function $y = y(w)$ satisfies $y_{u}(u,v) = i \xi(u,v) \; y_{v}(u,v)$ for some real 
valued function $\xi = \xi(u,v)$ defined over $M$.  Then we get equation (\ref{90}).\end{proof}

Using the content of Theorem (\ref{91}) we define a new class of functions, as follows.
\begin{dfn}
A complex function $Z : M \longrightarrow \complex$ is defined {\rm quasi-holomorphic} if, and only if, there exists a real valued 
function $\sigma : M \longrightarrow \real$ such that 
$$\frac{\partial Z}{\partial v} = i \sigma \frac{\partial Z}{\partial u}.$$ 
We denote this set of functions  by $\mathcal{O}(M)$.  Observe that $\sigma = 1$ implies that $Z$ is holomorphic function on $M$, which means that $Z' = Z_{u}$ and $Z' = -i Z_{v}$.
\end{dfn}

In particular we have the following subsets
\begin{prop}
The class of holomorphic and anti-holomorphic functions, $\mathcal{H}(M)$, $\overline{\mathcal{H}(M)}$, are 
contained in the class $\mathcal{O}(M)$, which is closed under conjugation $\mathcal{O}(M) = \overline{\mathcal{O}(M)}$.    
\end{prop}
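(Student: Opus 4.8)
The plan is to verify each of the three claims directly from the defining equation $Z_v = i\sigma Z_u$, translating between the Wirtinger operator $\partial_{\overline{w}}$ of Section 2 and the real partial derivatives $\partial_u,\partial_v$. First I would record the elementary identity that a holomorphic $Z$ satisfies $Z_v = i Z_u$: membership $Z \in \mathcal{H}(M)$ means $Z_{\overline{w}} = \frac{1}{2}(Z_u + i Z_v) = 0$, which rearranges to $Z_v = i Z_u$ (using $1/i = -i$). Comparing with the defining equation, this is exactly the quasi-holomorphic condition with the constant choice $\sigma \equiv 1$, so $\mathcal{H}(M) \subset \mathcal{O}(M)$.

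Next, for $Z \in \overline{\mathcal{H}(M)}$ the map $\overline{Z}$ is holomorphic, so by the previous step $(\overline{Z})_v = i (\overline{Z})_u$. Since conjugation commutes with the real partial derivatives $\partial_u$ and $\partial_v$, taking the conjugate of this relation sends $i \mapsto -i$ and yields $Z_v = -i Z_u$. This is the quasi-holomorphic condition with $\sigma \equiv -1$, giving $\overline{\mathcal{H}(M)} \subset \mathcal{O}(M)$.

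Finally, for closure under conjugation I would take an arbitrary $Z \in \mathcal{O}(M)$ with associated real function $\sigma$, so $Z_v = i\sigma Z_u$. Conjugating, and again using that $\partial_u,\partial_v$ commute with conjugation while $\sigma$ is real valued, I obtain $(\overline{Z})_v = -i\sigma(\overline{Z})_u = i(-\sigma)(\overline{Z})_u$. Because $-\sigma : M \to \real$ is again real valued, this exhibits $\overline{Z} \in \mathcal{O}(M)$ with associated function $-\sigma$; hence $\overline{\mathcal{O}(M)} \subseteq \mathcal{O}(M)$, and applying the same step to $\overline{Z}$ gives the reverse inclusion, so $\mathcal{O}(M) = \overline{\mathcal{O}(M)}$.

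The computations are entirely routine and I anticipate no substantive obstacle; the only point requiring care is the bookkeeping of signs, namely that complex conjugation sends $i \mapsto -i$ while fixing the real-valued $\sigma$. This is precisely what flips the sign of $\sigma$ under conjugation while keeping it real, and it is also what produces the two distinguished values $\sigma \equiv 1$ and $\sigma \equiv -1$ that realize the holomorphic and anti-holomorphic functions inside $\mathcal{O}(M)$.
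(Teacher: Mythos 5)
Your proof is correct, and it matches what the paper intends: the paper states this Proposition without proof, treating it as immediate from the Definition (whose remark that $\sigma \equiv 1$ recovers holomorphic functions is exactly your first step). Your explicit verification — $\sigma \equiv 1$ for $\mathcal{H}(M)$, $\sigma \equiv -1$ for $\overline{\mathcal{H}(M)}$, and $\sigma \mapsto -\sigma$ under conjugation for $\mathcal{O}(M) = \overline{\mathcal{O}(M)}$ — is precisely the routine argument the paper leaves implicit.
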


\begin{example}
Let $Z(w) = \varphi(u,v) + i \psi(u,v)$ be a holomorphic function. Taking two real valued functions $a(u)$ and $b(v)$ we define 
the function 
$$\Psi = \varphi(a(u),b(v)) + i \psi(a(u),b(v)),$$  
which belongs  to $\mathcal{O}(M)$.
Indeed, since $\Psi_{u} = a'(u) Z_{u}$ and $\Psi_{v} = b'(v) Z_{v}$, since $Z \in \mathcal{H}(M)$, \ it follows from $Z_{v} = i Z_{u} = iZ'$, for   
$\sigma = b'(v)/a'(u)$, that $\Psi_{v} = i \sigma \Psi_{u}$. 

For instance, if we take $Z(w)=w^2$,  $a(u)=u$ and $b(v)=v^2$.  This gives $\Psi(u,v) = u^{2} - v^{4} + 2 i u v^{2} \in \mathcal{O}(M)$ and $\sigma(u,v) = 2v$. Indeed, 
$\Psi_{v} = 2vi(2u + 2iv^2) = 2vi\Psi_{u}$. 
\end{example}

\begin{example}
We observe that a solution of the system (\ref{xuvyuv}) is given by the real valued functions $x = v$ and $y = u$. Then, we take the parametric surface 
$$f(u,v) = \frac{W(v,u) + W(u,v)}{2(u - v)} \; \; \ \ {\rm for } \; \; \ \ M = \{(u,v) \in \complex | \; u > v\}.$$ 
Since in this case $\mu = 1/(u - v)$,  the spherical condition (\ref{92}) and isotropic conditions (\ref{89}) are satisfied trivially.  
Furthermore, since the third coordinate $f^{3}(u,v) = 0$ then the subset $f(M)$ is an open subset of the sphere 
$$\{(t,x,0,z) \in \mink | \; -t^{2} + x^{2} + z^{2} = 1\},$$ 
so this surface is a totally geodesic open submanifold of the $2$-dimensional de Sitter space form, away from the set $u=v$. Thus, it is minimal in $\sitter$. 
\end{example}

 \begin{example}\label{100}
For each $w = u + iv \in \mathbb C$ \ let  
$$
\mu(u,v) = \frac{\sqrt{2}(1+i)}{4} e^{(v - u)}
$$
$$
x(u,v) = e^{ (u -  v) +i(v + u)} \ \ \ \ {\rm and} \ \ \ \ y(u,v) = - e^{(u -  v) +i( v + u)}
$$
Then we have an isotropic surface in $\sitter$ and the shape operator, with respect to the flat null coordinates $\{u,v\}$ is $\begin{bmatrix} 0 & 1 \\1 & 0 \end{bmatrix}.$ \ In fact, the functions $f(w) = \frac{\mu W + \overline \mu \overline W}{2}$ and  $\nu(w) = \frac{ \mu W - \overline \mu \overline W}{2i}$ take the form
$$
f(u,v) = \frac{\sqrt{2}}{2}(\sinh(v-u), - \sin(u+v), \cos(u+v), - \cosh(v-u)),
$$
$$
\nu(u,v) = \frac{\sqrt{2}}{2}(\sinh(v-u),  \sin(u+v), - \cos(u+v), - \cosh(v-u)).
$$
Then $\l f, f\r =1 = \l \nu, \nu\r$, $\l f, \nu \r = 0 = \l f_u, \nu\r = \l f_v, \nu\r$  and $f_u$, $f_v$ are lightlike vectors with $\l f_u, f_v\r = F = 1$.  So we are taking the basis $\{f, f_u, f_v, \nu\}$ of \ $\mathbb R^4_1$. 
\ Moreover $\l f_{uv}, \nu\r = 0$, which implies that the surface is minimal, so $\Phi =0$. Hence
 using formulas (\ref{alpha,beta minimal}), the real valued functions $\alpha$ and $\beta$ take the form
 $$
 \alpha = - \frac{\sqrt{2}}{4} e^{v-u} = - \beta.
 $$
 It is easy to see that the spherical and isotropic conditions (\ref{92}) and (\ref{89}) are satisfied. Finally we note that from Theorem (\ref{99}) the pair $(M, \nu)$ also represents a minimal timelike surface in $\mathbb S^3_1(1)$ with Gauss map given by $f(w)$, and whose isotropic conditions are given by formula (\ref{89}).
\end{example}

\section{When $\Phi = 0$ and $x,y$ are holomorphic functions satisfying the system(\ref{xuvyuv})}

In this last section we focus on surfaces with $\Phi = 0$ and for which $x, y\in \mathcal H(M)$ satisfy the system(\ref{xuvyuv}). In particular we show that in this case the functions $x$ and $y$ are related by a Mobius transformation in a complex variable and that the argument $\theta$ of the complex expression of the integration factor $\mu$ for the local expression $W(x,y)$,  should be  a  harmonic function in $M$. In particular, we give explicit formulas for $x$ and $y$ when we assume $(M, f)$ is a minimal isotropic surface in $\mathbb S^3_1(1)$ where $f$ is given by (\ref{standardequation}).  We also give, using the techniques developed in this paper,  the explicit construction of  families of timelike surface in $\mathbb S^3_1(1)$ whose $\Phi =0$. This example will be a generalization of Example (\ref{100}).

\begin{theor} \label{97}
Let $x(w)$ and $y(w)$ be two holomorphic functions from $M$ into $\complex$, such that $x - y \neq 0$ and  $x'y'\ne 0$. 
Since $x_{u} = x'$ and $x_{v} = i x'$, and the same is true for $y$, the system (\ref{xuvyuv}) for these functions becomes, after dividing by $i$ on both sides:
\begin{equation}\label{29} 
x'' = \frac{2 {x'}^{2}}{x - y} \; \; \mbox{ and } \; \; y'' = \frac{-2 {y'}^{2}}{x - y}.  
\end{equation}
Then, there exists a  M\"obius transformation 
$$M_{c}(z) = \frac{z}{cz - 1} \; \; \mbox{ where } \; \; c \in \overline{\complex}$$ 
such that, we have $y'(w) = M_{c}(x'(w))$ for each $w \in M$. Conversely, if $x(w)$ and $y(w)$ are related by 
$M_{c}(z)$ and $x(w)$ is a solution of the equation (\ref{29}) for $x$, then $y(w)$ is another solution  $y$, of the system (\ref{xuvyuv}). 
\end{theor}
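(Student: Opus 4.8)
The reduction from (\ref{xuvyuv}) to (\ref{29}) is already carried out in the statement, so my plan is to analyze the coupled pair of second-order ODEs (\ref{29}) directly. The key observation I would exploit is that each equation can be recast as a first-order statement about the reciprocal of the derivative: since $x',y'$ are holomorphic and nowhere zero, $\left(\frac{1}{x'}\right)' = -\frac{x''}{(x')^2}$, so the two equations in (\ref{29}) become $\left(\frac{1}{x'}\right)' = -\frac{2}{x-y}$ and $\left(\frac{1}{y'}\right)' = \frac{2}{x-y}$.

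Adding these two recast equations, the right-hand sides cancel and I obtain $\left(\frac{1}{x'} + \frac{1}{y'}\right)' = 0$. Because $M$ is connected, $\frac{1}{x'} + \frac{1}{y'}$ is a (finite, since $x'y'\neq 0$) complex constant $c$. Solving the relation $\frac{1}{x'} + \frac{1}{y'} = c$ for $y'$ gives $\frac{1}{y'} = c - \frac{1}{x'} = \frac{cx'-1}{x'}$, i.e. $y' = \frac{x'}{cx'-1} = M_c(x')$, which is exactly the asserted M\"obius relation with the displayed normalization. This settles the forward direction.

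For the converse, I would run the computation backwards. Starting from $y' = M_c(x') = \frac{x'}{cx'-1}$, I first recover $\frac{1}{x'}+\frac{1}{y'} = c$ by a one-line algebraic manipulation, so that differentiating this constant relation yields $\frac{y''}{(y')^2} = -\frac{x''}{(x')^2}$. Substituting the hypothesis that $x$ solves its own equation, $\frac{x''}{(x')^2} = \frac{2}{x-y}$, then forces $\frac{y''}{(y')^2} = -\frac{2}{x-y}$, i.e. $y'' = \frac{-2(y')^2}{x-y}$, which is the second equation of (\ref{29}); unwinding the holomorphic identifications $y_u = y'$, $y_v = iy'$ returns the $y$-component of (\ref{xuvyuv}).

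The computational content is light, so the main points requiring attention are bookkeeping rather than genuine obstacles. I must ensure that every denominator is legitimate, which is precisely where the standing hypotheses $x-y\neq 0$ and $x'y'\neq 0$ enter: they guarantee that $\frac{1}{x'}$ and $\frac{1}{y'}$ are honest holomorphic functions and that $M_c$ is applied away from its pole. I should also account for the boundary value $c=\infty$ claimed in the statement: the integration above always produces $c \in \complex$, and $c=\infty$ arises only as the degenerate limit of the pencil $M_c$ (formally $y'\equiv 0$), which the hypothesis $y'\neq 0$ excludes; I would simply record it as the limiting member of the family so that the parameter ranges over $\overline{\complex}$.
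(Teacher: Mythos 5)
Your proof is correct and follows essentially the same route as the paper: both rewrite $x''/(x')^{2}$ and $y''/(y')^{2}$ as derivatives of $-1/x'$ and $-1/y'$, add the two equations so the right-hand sides cancel, integrate on the connected domain $M$ to get $\frac{1}{x'}+\frac{1}{y'}=c$, and solve for $y'=M_{c}(x')$, with the converse obtained by differentiating this constant relation and substituting the equation satisfied by $x$. Your closing remark about the degenerate value $c=\infty$ is a minor bookkeeping point that the paper relegates to the corollary immediately following the theorem.
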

\begin{proof}
Since 
$$\frac{x''}{x'^{2}} + \frac{y''}{y'^{2}} = \left(\frac{-1}{x'}\right)' + \left(\frac{-1}{y'}\right)' = 0 
\; \; \Longleftrightarrow \; \; \left(\frac{1}{x'}\right) + \left(\frac{1}{y'}\right) = c \in \complex$$ 
we obtain the family of relations $y' = M_{c}(x')$. For the converse, we assume that $y' = M_{c}(x')$ and $x$ satisfies (\ref{29}), then it follows that  $y''/(y')^{2} = -2/(x - y)$.
\end{proof}

\begin{corol}
Let $x(w),  \ y(w) \in \mathcal H(M)$ such that $x - y \neq 0$,  $x'y'\ne 0$, with $x$ and $y$ satisfying  equation (\ref{xuvyuv}). 
If $c = 0$ in the M\"obius transformation then $y' = -x'$, and if $c = \infty$ then $y' = 0$.
\end{corol}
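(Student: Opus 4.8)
The plan is to read off both cases directly from the relation established inside the proof of Theorem \ref{97}, namely
$$\frac{1}{x'} + \frac{1}{y'} = c,$$
which is exactly equivalent to $y' = M_{c}(x')$. Since $x,y \in \mathcal{H}(M)$ with $x'y' \neq 0$, the reciprocals $1/x'$ and $1/y'$ are well-defined holomorphic functions on $M$, so this algebraic identity may be manipulated pointwise without restriction.

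First I would handle $c = 0$. Substituting into the displayed relation gives $1/x' + 1/y' = 0$, hence $1/y' = -1/x'$, and inverting yields $y' = -x'$ on all of $M$. Equivalently, one may simply evaluate the M\"obius map at the parameter $c = 0$: since $M_{0}(z) = z/(0\cdot z - 1) = -z$, we obtain $y' = M_{0}(x') = -x'$, as claimed.

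For the case $c = \infty$ I would interpret the relation in the extended plane $\overline{\complex}$. Because $x' \neq 0$, the term $1/x'$ is finite, so the equality $1/x' + 1/y' = \infty$ forces $1/y' = \infty$, that is, $y' = 0$. The same conclusion drops out of the limiting form of the M\"obius map, $M_{\infty}(z) = \lim_{c \to \infty} z/(cz - 1) = 0$, whence $y' = M_{\infty}(x') = 0$.

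There is essentially no genuine obstacle here; the corollary is a direct specialization of Theorem \ref{97} to the two distinguished values of the parameter $c$. The only point that deserves a remark is that the conclusion $y' = 0$ in the $c = \infty$ case is formally incompatible with the standing hypothesis $x'y' \neq 0$, so this value of $c$ describes the boundary of the family of relations $y' = M_{c}(x')$ rather than an admissible solution; I would state this explicitly to avoid any apparent inconsistency.
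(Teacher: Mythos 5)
Your proposal is correct and is exactly the intended argument: the paper states this corollary without proof, as an immediate specialization of the relation $\frac{1}{x'} + \frac{1}{y'} = c$ (equivalently $y' = M_c(x')$) from Theorem \ref{97} to the values $c = 0$ and $c = \infty$, which is precisely what you do. Your closing remark that $c = \infty$ forces $y' = 0$ and is therefore only a degenerate boundary case, incompatible with the standing hypothesis $x'y' \neq 0$, is a fair observation about the statement itself.
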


\begin{example}
If $c=0$ this means $x' + y' = 0$,  which implies $x + y = 2a$ for $a \in \complex$. Hence taking $x - y = 2z$ we obtain $x = z + a$ and then equation (\ref{29}) become
$\frac{z''}{z'} = \frac{z'}{z}$. \ So  $\log z' = \log z + \log k = \log k z$  therefore  
$\frac{z'}{z} = k,$ for complex number $k$. Then, the solution of the system (\ref{29}) is    
$$x = a + e^{kw + b}\; \; \ \  {\rm and } \; \; \ \  y = a - e^{kw + b}$$ 
for complex numbers $a, b$ and $k \neq 0$. 
\end{example}

Now we obtain informations about the argument of the integration factor $\mu$.  Since $\vert \mu \vert = 1/ \vert x - y\vert$ the polar form of this 
function is 
$$\mu(w) = \frac{e^{i \theta(w)}}{\vert x(w) - y(w) \vert}.$$ 

\begin{lemma}
For $x,y \in \mathcal{H}(M)$, the spherical condition (\ref{92}) for the polar form of $\mu$ is
\begin{equation}\label{95}
\theta_{w} = \frac{i}{2} \; \frac{x' + y'}{x - y}. 
\end{equation}
Therefore, the real valued function $\theta$ is harmonic in $M$. 
\end{lemma}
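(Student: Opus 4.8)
The plan is to start from the spherical condition in the form given by equation (\ref{92}), namely
\[
\frac{\mu_{w}}{\mu} = \frac{-x_{w}}{x - y} + \frac{\overline{y}_{w}}{\overline{x} - \overline{y}},
\]
and simplify it under the holomorphicity hypotheses. Since $x, y \in \mathcal{H}(M)$ we have $x_{w} = x'$, $y_{w} = y'$, while $\overline{y}$ is anti-holomorphic so $\overline{y}_{w} = 0$ and $\overline{x}_{w} = 0$. Thus the second term drops out entirely and the spherical condition collapses to
\[
\frac{\mu_{w}}{\mu} = \frac{-x'}{x - y}.
\]

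Next I would compute $\mu_{w}/\mu$ directly from the polar form $\mu = e^{i\theta}/|x - y|$. Writing $\log \mu = i\theta - \tfrac{1}{2}\log\bigl((x-y)\overline{(x-y)}\bigr)$, I would apply $\partial_{w}$ to get
\[
\frac{\mu_{w}}{\mu} = i\theta_{w} - \frac{1}{2}\,\frac{x' - y'}{x - y},
\]
using again that $\overline{(x-y)}$ is anti-holomorphic so its $w$-derivative vanishes, and that $(x-y)_{w} = x' - y'$. Equating this with $-x'/(x-y)$ from the previous step and solving for $\theta_{w}$ yields
\[
i\theta_{w} = \frac{-x'}{x - y} + \frac{1}{2}\,\frac{x' - y'}{x - y} = \frac{1}{2}\,\frac{-2x' + x' - y'}{x - y} = -\frac{1}{2}\,\frac{x' + y'}{x - y},
\]
so $\theta_{w} = \tfrac{i}{2}\,(x'+y')/(x-y)$, which is exactly equation (\ref{95}).

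For the harmonicity claim, the key observation is that the right-hand side $\tfrac{i}{2}(x'+y')/(x-y)$ is a holomorphic function of $w$, since $x, y$ are holomorphic and $x - y \neq 0$ on $M$. Therefore $\theta_{w}$ is holomorphic, meaning $(\theta_{w})_{\overline{w}} = 0$. Since $\theta$ is real-valued and $\partial_{w}\partial_{\overline{w}} = \tfrac{1}{4}\Delta$, this gives $\Delta\theta = 4\,\partial_{\overline{w}}\partial_{w}\theta = 4\,\overline{\partial_{w}(\overline{\theta_w})}$; more cleanly, one notes $\theta = \operatorname{Re}\!\bigl(-i\,\textrm{(primitive of }2\theta_w)\bigr)$ is the real part of a holomorphic function and hence harmonic, or simply that a real function whose $w$-derivative is holomorphic is automatically harmonic because $\Delta\theta = 4\,(\theta_{\overline w})_{w} = 4\,\overline{(\theta_w)_{\overline w}} = 0$. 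I expect the only mild subtlety to be bookkeeping the conjugation in passing from $\theta_w$ holomorphic to $\Delta\theta = 0$; all the derivative computations themselves are routine applications of the Wirtinger calculus established in the Preliminaries.
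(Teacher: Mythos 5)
Your proof is correct and follows essentially the same route as the paper: simplify the spherical condition (\ref{92}) using holomorphicity of $x$ and $y$, log-differentiate the polar form $\mu = e^{i\theta}/|x-y|$, and equate to solve for $\theta_w$. The only difference is that you spell out the harmonicity argument ($\theta_w$ holomorphic $\Rightarrow$ $\Delta\theta = 4(\theta_w)_{\overline{w}} = 0$ for real $\theta$), which the paper leaves implicit; that addition is correct and welcome.
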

\begin{proof}
Since $x_w = x'$, \ $x_{\overline w} = 0$  \ and 
$$\log \mu = i \theta - \frac{1}{2} \log (x - y) - \frac{1}{2} \log (\overline{x} - \overline{y}) 
\; \; \mbox{ then } \; \; \ \ \frac{\mu_{w}}{\mu} = i \theta_{w} -  \frac{1}{2}\frac{x' - y'}{x - y}.$$ 
Since same equations hold for $y$, we obtain from equations (\ref{92}) 
$$\frac{-x'}{x - y} = i \theta_{w} - \frac{1}{2} \; \frac{x'}{x - y} + \frac{1}{2} \; \frac{y'}{x - y},$$ 
which implies equation (\ref{95}).
\end{proof}

Here we recall that we are assuming that $f_u$ is a multiple of $L(x)$ and $f_v$ is a multiple of $L(y)$.
\begin{lemma}
For $x,y \in \mathcal{H}(M)$, the isotropic condition (\ref{89}) corresponds to the equations 
\begin{equation}\label{96}
\Re \left( e^{i \theta} \frac{x'}{x - y} \right) = 0 \; \; \ \ {\rm and } \; \; \ \ 
\Re \left( e^{-i \theta} \frac{iy'}{x - y} \right) = 0.
\end{equation}
\end{lemma}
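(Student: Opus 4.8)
The plan is to substitute the polar form of $\mu$ together with the holomorphic derivative relations directly into the two equations making up the isotropic condition (\ref{89}), and then to read off each resulting two-term expression as twice the real or the imaginary part of a single complex quantity.

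First I would record the derivatives that enter (\ref{89}). Since $x,y \in \mathcal{H}(M)$ we have $x_u = x'$ and $x_v = i x'$, and likewise $y_u = y'$, $y_v = i y'$; their conjugates are anti-holomorphic, so $\overline{x}_u = \overline{x'}$ and $\overline{x}_v = -i\overline{x'}$, with the analogous formulas for $\overline{y}$. I would also insert the polar form $\mu = e^{i\theta}/|x-y|$, so that $\overline{\mu} = e^{-i\theta}/|x-y|$, and observe that the common factor $1/|x-y|$ is a positive real number (as $x \neq y$ on $M$) which may be cancelled when each expression is set to zero.

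Next I would treat the two equations separately. For the second equation of (\ref{89}), substituting $x_u = x'$ and $\overline{x}_u = \overline{x'}$ and cancelling $1/|x-y|$ produces
$$\frac{e^{i\theta} x'}{x - y} + \frac{e^{-i\theta}\, \overline{x'}}{\overline{x} - \overline{y}} = 0.$$
Writing $\xi = e^{i\theta} x'/(x-y)$, the second summand is exactly $\overline{\xi}$, so the left-hand side is $\xi + \overline{\xi} = 2\Re(\xi)$; this yields the first equation of (\ref{96}). For the first equation of (\ref{89}), substituting $y_v = i y'$ and $\overline{y}_v = -i\overline{y'}$, factoring out the common $i$, and cancelling $1/|x-y|$ gives
$$\frac{e^{-i\theta}\, y'}{x - y} - \frac{e^{i\theta}\, \overline{y'}}{\overline{x} - \overline{y}} = 0.$$
Setting $\eta = e^{-i\theta} y'/(x-y)$, the left-hand side equals $\eta - \overline{\eta} = 2i\,\Im(\eta)$. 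Since $\Re(i\eta) = -\Im(\eta)$, the vanishing of $\Im(\eta)$ is equivalent to $\Re\bigl(e^{-i\theta}\, i y'/(x-y)\bigr) = 0$, which is the second equation of (\ref{96}).

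There is no genuine obstacle here; the whole content is the standard device of pairing each term with its complex conjugate so that a two-term sum collapses to a pure real or pure imaginary part. The only points that demand care are the sign bookkeeping coming from $x_v = i x'$ versus $\overline{x}_v = -i\overline{x'}$, and the final conversion of the imaginary-part condition arising from the first equation of (\ref{89}) into the advertised real-part form by means of the identity $\Re(i\zeta) = -\Im(\zeta)$.
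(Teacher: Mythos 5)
Your proposal is correct and follows essentially the same route as the paper: substitute the polar form $\mu = e^{i\theta}/|x-y|$ and the holomorphicity relations $x_u = x'$, $y_v = iy'$ into (\ref{89}), cancel the positive factor $1/|x-y|$, and recognize each two-term sum as a complex quantity plus (or minus) its conjugate. The paper keeps the factor $i$ attached to $y'$ and reads off the real-part condition directly, whereas you factor it out and convert via $\Re(i\eta) = -\Im(\eta)$; this is the same computation with the bookkeeping done in a slightly different order.
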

\begin{proof}
It follows from equations (\ref{89}) since $|x-y|$ is real,  $x_{u} = x'$ and $y_{v} = i y'$. 
Indeed, equation (\ref{89}) say that $\frac{e^{i \theta} \; x'}{x - y}$  and $\frac{i e^{-i \theta} \; y'}{x - y}$ are 
imaginary valued functions.  
\end{proof}
\begin{corol}
For $x,y \in \mathcal{H}(M)$, the equations (\ref{96}) mean that 
$$arg\left(\frac{x'}{x - y}\right) = -\theta \pm \frac{\pi}{2}+2 k \pi \; \; \ \  {\rm and } \; \;  \ \ 
arg\left(\frac{y'}{x - y}\right) = \theta\pm \pi  + 2k \pi, \; \; \mbox{ for } \; \; k \in \mathbb{Z}.$$ 
\end{corol}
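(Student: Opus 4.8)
The plan is to start directly from the two conditions in equation (\ref{96}) and translate the vanishing of the real parts into statements about arguments. Since a nonzero complex number $\zeta$ satisfies $\Re(\zeta)=0$ precisely when $\arg(\zeta) \equiv \pm\pi/2 \pmod{2\pi}$, I would apply this observation to each factor in (\ref{96}) separately. The first equation asserts $\Re\!\left(e^{i\theta}\,\frac{x'}{x-y}\right)=0$, so the argument of the product $e^{i\theta}\,\frac{x'}{x-y}$ must be $\pm\pi/2$ modulo $2\pi$; using $\arg(e^{i\theta})=\theta$ and the additivity of $\arg$ under multiplication, this immediately gives $\theta+\arg\!\left(\frac{x'}{x-y}\right)\equiv \pm\tfrac{\pi}{2}\pmod{2\pi}$, which rearranges to the first claimed formula.

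For the second equation I would treat $e^{-i\theta}\,\frac{iy'}{x-y}$ the same way, but now tracking the extra factor of $i$. Writing $\arg(i)=\pi/2$ and $\arg(e^{-i\theta})=-\theta$, the condition $\Re\!\left(e^{-i\theta}\,\frac{iy'}{x-y}\right)=0$ becomes
\begin{equation*}
-\theta+\frac{\pi}{2}+\arg\!\left(\frac{y'}{x-y}\right)\equiv \pm\frac{\pi}{2}\pmod{2\pi}.
\end{equation*}
Isolating $\arg\!\left(\frac{y'}{x-y}\right)$ gives $\theta-\tfrac{\pi}{2}\pm\tfrac{\pi}{2}$ modulo $2\pi$, and since $-\tfrac{\pi}{2}\pm\tfrac{\pi}{2}$ yields either $0$ or $-\pi$, both branches collapse into $\theta\pm\pi$ modulo $2\pi$, matching the stated formula once the periodicity $+2k\pi$ is recorded explicitly.

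The only genuine subtlety — really the one bookkeeping step to be careful about rather than a hard obstacle — is handling the $\pm$ sign and the additive factor of $i$ consistently, so that the two seemingly different right-hand sides ($-\theta\pm\tfrac{\pi}{2}$ versus $\theta\pm\pi$) emerge correctly from the two equations. I would make sure to note that $\arg$ is only defined modulo $2\pi$, which is exactly why the $+2k\pi$ appears and why the $\mp$ versus $\pm$ ambiguity is absorbed into the stated $\pm$. The hypotheses $x-y\neq 0$ and $x'y'\neq 0$ from the preceding corollary guarantee that the quantities $\frac{x'}{x-y}$ and $\frac{y'}{x-y}$ are nonzero, so their arguments are well defined and the $\Re(\cdot)=0$ criterion genuinely pins down the argument rather than being vacuously satisfied. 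No new computation beyond this argument-extraction is required, since equations (\ref{96}) have already been established in the previous lemma.
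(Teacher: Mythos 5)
Your strategy is the right one, and it is in fact the only argument available: the paper gives no separate proof of this corollary, treating it as an immediate rereading of equations (\ref{96}) via the observation that a nonzero complex number $\zeta$ has $\Re(\zeta)=0$ exactly when $\arg\zeta\equiv\pm\pi/2\pmod{2\pi}$. Your handling of the first equation is correct, as is your remark that the hypotheses $x-y\neq 0$ and $x'y'\neq 0$ are what make the arguments well defined. The flaw is in your last step for the second equation. From
\begin{equation*}
-\theta+\frac{\pi}{2}+\arg\!\left(\frac{y'}{x-y}\right)\equiv \pm\frac{\pi}{2}\pmod{2\pi}
\end{equation*}
you correctly obtain the two branches $\arg\!\left(\frac{y'}{x-y}\right)\equiv\theta$ or $\theta-\pi\pmod{2\pi}$, but these do \emph{not} ``collapse into $\theta\pm\pi$ modulo $2\pi$'': the values $\theta$ and $\theta-\pi$ differ by $\pi$, which is not a multiple of $2\pi$, so the branch $\theta+2k\pi$ is simply not of the form $\theta\pm\pi+2k\pi$. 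The congruence you assert in order to match the displayed formula is false.

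What your computation actually establishes is $\arg\!\left(\frac{y'}{x-y}\right)=\theta+k\pi$ for $k\in\mathbb{Z}$, equivalently that $e^{-i\theta}\,\frac{y'}{x-y}$ is real and nonzero; read literally, the corollary's ``$\theta\pm\pi+2k\pi$'' captures only the odd multiples of $\pi$ and omits the branch $\theta+2k\pi$. The discrepancy therefore sits in the paper's own loose notation, and the correct response is to state the corrected congruence (mod $\pi$, not mod $2\pi$), not to hide the mismatch behind an invalid identification. If one insists on singling out one branch, extra geometric input is required, namely that $f_u=\alpha L(x)$ is future directed, so $\alpha>0$: then (\ref{alpha,beta minimal}) together with $\mu=e^{i\theta}/\vert x-y\vert$ gives $e^{-i\theta}\,\frac{y'}{x-y}=\alpha\,\vert x-y\vert>0$, i.e. $\arg\!\left(\frac{y'}{x-y}\right)=\theta+2k\pi$, which is precisely the branch excluded by the formula as printed (and, in the same way, $\beta>0$ selects $\arg\!\left(\frac{x'}{x-y}\right)=-\theta+\frac{\pi}{2}+2k\pi$ in the first equation).
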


This last corollary  says that the function $\theta$ carries quite a bit of information about the holomorphic functions 
$x'$, $y'$ and $x - y$.  

\begin{theor}[Necessity] 
Assume that $(M,f)$ is a minimal  isotropic surface into $\sitter$, such that $\theta$ is a non-constant real 
valued harmonic function.  In addition we suppose that $x$ and $y$ are holomorphic functions where $f$ is given by equation (\ref{standardequation}). Then, 
there exists constants $k,c \in \complex \setminus \{0\}$ such that 
\begin{equation}\label{98}
x(w) = \frac{1}{c} \int_{w_{0}}^{w} (1 + ke^{\psi(\xi)})d \xi \; \; \ \ {\rm and } \; \; \ \ 
y(w) = \frac{1}{ck} \int_{w_{0}}^{w} (k + e^{-\psi(\xi)})d \xi.
\end{equation} 
where the harmonic function $\psi$ is given by \; \; \; \; 
\begin{equation}
\psi(w) = \theta(w_{0}) - 4i \int_{w_{0}}^{w} \theta_{w}(\xi) d \xi.
\end{equation}
\end{theor}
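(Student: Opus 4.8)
The plan is to reduce the statement to two first-order relations for the holomorphic derivatives $x'$ and $y'$ and then integrate. First I would observe that since $\theta$ is harmonic, $\theta_w$ is holomorphic on $M$; because $M$ is simply connected, the contour integral $\int_{w_0}^w \theta_w(\xi)\,d\xi$ is path-independent and defines a holomorphic primitive, so the function $\psi$ in the statement is a well-defined holomorphic function with $\psi' = -4i\,\theta_w$. Combining this with the spherical condition in the form of equation (\ref{95}), namely $\theta_w = \frac{i}{2}\frac{x'+y'}{x-y}$, immediately gives $\psi' = 2\frac{x'+y'}{x-y}$.

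Next I would extract the logarithmic derivatives from the minimality system (\ref{29}): dividing by $x'$ and $y'$ respectively yields $(\log x')' = \frac{2x'}{x-y}$ and $(\log y')' = \frac{-2y'}{x-y}$. Subtracting these produces $(\log(x'/y'))' = 2\frac{x'+y'}{x-y} = \psi'$, so after integrating over the simply connected $M$ one obtains $x'/y' = k\,e^{\psi}$ for a constant $k \in \complex \setminus \{0\}$, the nonvanishing coming from $x'y' \neq 0$. I would then invoke Theorem (\ref{97}), which supplies a constant $c$ with $\frac{1}{x'}+\frac{1}{y'}=c$, equivalently $x'/y' = cx'-1$. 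Equating the two expressions for $x'/y'$ gives $cx'-1 = k e^{\psi}$, and solving produces $x' = \frac{1}{c}(1+k e^{\psi})$; substituting back, $y' = \frac{x'}{k e^{\psi}} = \frac{1}{ck}(k + e^{-\psi})$.

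To finish I would integrate these two identities from $w_0$, which gives exactly the claimed formulas (\ref{98}). It remains to justify $c \neq 0$: if $c = 0$ then Theorem (\ref{97}) forces $y' = -x'$, hence $x'+y' \equiv 0$, hence $\psi' \equiv 0$ and $\theta_w \equiv 0$; since $\theta$ is real this gives $\theta_u = \theta_v = 0$, contradicting the hypothesis that $\theta$ is non-constant. I expect the main (though modest) obstacle to be bookkeeping rather than genuine analysis: ensuring that $\psi$, the primitives defining $x$ and $y$, and the branch of the logarithm used in passing from $(\log(x'/y'))'=\psi'$ to $x'/y' = ke^{\psi}$ are all single-valued, which is precisely where the simple connectivity of $M$ together with the hypotheses $x \neq y$ and $x'y' \neq 0$ are used.
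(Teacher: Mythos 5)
Your proof is correct and follows essentially the same route as the paper's: combine the M\"obius relation $\frac{1}{x'}+\frac{1}{y'}=c$ from Theorem (\ref{97}) with the identity $\bigl(\log(x'/y')\bigr)' = 2\frac{x'+y'}{x-y} = -4i\theta_w = \psi_w$ obtained from (\ref{29}) and (\ref{95}), solve the resulting system for $x'$ and $y'$, and integrate. The only difference is that you explicitly justify $c \neq 0$ from the non-constancy of $\theta$ and note the single-valuedness issues settled by simple connectivity, points the paper's proof asserts without comment.
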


\begin{proof}
By Theorem (\ref{97}) we have that there exists a Mobius transformation $M_c$ such that $y' = M_c(x')$. Then $y' (cx' - 1) = x'$. Hence
 $x' + y' = cx'y'$ for $c \neq 0$. 
 
 Now, from equations (\ref{29}) 
we obtain from equation (\ref{95}): \; \; \; \; 
$$\frac{x''}{x'} - \frac{y''}{y'} = 2 \frac{x' + y'}{x - y} = -4i \theta_{w} =: \psi_w.$$ 
Then, we have the system 
$$x' + y' = cx'y' \; \; \mbox{ and } \; \; \frac{x'}{y'} = ke^{\psi},$$ 
since the logarithmic derivative  $x'/y'$ equals $\psi_w$. From 
these two equations it follows $cx'= 1 + ke^{\psi}$ and $kcy' = k + e^{-\psi}$ therefore we get expressions (\ref{98}). Then, from the fact that $\theta$ is harmonic function, it follows  immediately that $\psi$ is also a harmonic function. 
\end{proof}

In the last example we construct  families of isotropic surfaces in $\mathbb S^3_1(1)$ using the technique described above. In fact,
 
\begin{example}
For complex numbers $c$ and $k \neq 0$ and taking $0 \neq r \in \real$ let us define for each $w = u + iv \in \complex$:  
\begin{align*}
\mu(u,v) = \frac{\sqrt{2}(1 + i)}{4\vert k \vert} e^{r( v -  u)}\\
x(u,v) = c + k e^{(1+i) r (u +iv)}  \\ 
y(u,v) = c - k e^{(1+i) r (u +iv)}
\end{align*}
then these data give us families of isotropic surfaces on $\sitter$. 

\vspace{0.2cm}

In fact, we begin by assuming $x, y \in \mathcal H(M)$ such that  $x + y = 2c$ \ and \ $x - y = 2z =  2ke^{aw}$, where $c, a \in \mathbb C$, and 
 $k \in \mathbb C - \{0\}$. Then we see first that $z'/z = a$. 

\vspace{0.1cm}
Now we look for the function $\mu$ satisfying the spherical and isotropic equations (\ref{92}), (\ref{89}), to obtain an isotropic immersion in $\mathbb S^3_1(1)\subset \mathbb R^4_1$. 

 From equation 
(\ref{92}) we obtain 
$$\frac{\mu_{w}}{\mu} = \frac{-x_{w}}{x - y} = \frac{\overline{\mu}_{w}}{\overline{\mu}} =  \frac{y_{w}}{x - y}=  \frac{-a}{2},$$ 
because $x_{w} = x' = -y'$. Now, since we need $\vert \mu \vert = 1/\vert 2k e^{aw} \vert$ with $0 \neq k \in \complex$, we take then 
$$\mu(u,v) = \frac{e^{i \theta}}{2 \vert k \vert} e^{-\Re(aw)}.$$
We note that since $\frac{\mu_{w}}{\mu} = \frac{-a}{2} $ it follows that $\theta_{w} = 0$, which implies that $\theta \in \mathbb C$ is constant. 

Now, since $x_w = x_u = x'$, the second equation of (\ref{89}) says that $\Re(\mu \frac{a}{2}) = \Re (\mu \frac{x_u}{x-y}) =0$, so $\mu \frac{a}{2}$ is imaginary.  Since $y_{v} = i y' = -i x'$ the first equation of  (\ref{89}) says that 
$\overline{\mu} i \frac{a}{2}$ is also imaginary.  Taking $e^{i \theta} = p$, we obtain that 
\begin{equation}\label{99} 
p a = -\overline{p} \; \overline{a} \; \; \ \ {\rm and } \; \; \ \ \overline{p} a =  p \overline{a}.
\end{equation}
This last implies that 
$(\frac{a}{\overline a})^2 = -1 = (\frac{p}{\overline p})^2$. \ Then from $p^2= - (\overline p)^2$, we find that for some real $b$, $p=b (1 \pm i )$.  Analogously we get that for some real $r\ne 0$,  $a=r(1\pm i).$
Finally, since $\vert p \vert = 1$ and remembering that $a$ and $p$  have to satisfy equation (\ref{99}),  we choose from a set of four possible solutions for $p^{4} = 1$, the following values: 
$$p = e^{i\pi/4} = \frac{\sqrt{2}}{2}(1 + i) \; \; \ \ {\rm  and } \; \; \ \ a = r(1 + i).$$ 
Hence the equations enunciated in the beginning of this example follows from this choice for $\mu(u,v)$. 
\end{example}

{\bf Acknowledgments}
The first author's research was supported by Projeto Tem\'atico Fapesp n. 2016/23746-6. S\~ao Paulo. Brazil.

\end{document}